\def\hd{\bar{d}}
\newcounter{rot}
\def\a{\alpha} \def\b{\beta} \def\gd{\delta} \def\D{\Delta}
\def\e{\varepsilon} \def\f{\phi}   \def\g{\gamma}
\def\z{\zeta} \def\th{\theta}    \def\l{\lambda}
 \def\m{\mu} \def\n{\nu} \def\p{\pi}
  \def\s{\sigma} 
\def\t{\tau} \def\om{\omega}
\newtheorem{theorem}{Theorem}
\newtheorem{lemma}[theorem]{Lemma}
\def\cT{{\mathcal T}}
\newcommand{\rdup}[1]{{\left\lceil #1\right\rceil }}
\newcommand{\brac}[1]{\left(#1\right)}
\newcommand{\bfrac}[2]{\left(\frac{#1}{#2}\right)}
\def\cE{{\cal E}}
\newcommand{\set}[1]{\left\{#1\right\}}
\def\E{\mathbb{E}}
\def\Var{\mathbb{V\text{ar}}}
\def\Pr{\mathbb{P}}
\def\cF{{\cal F}}
\newcommand{\ignore}[1]{}
\def\cT{{\mathcal T}}
\newcommand{\card}[1]{\left|#1\right|}
\newcommand{\beq}[2]{\begin{equation}\label{#1}#2\end{equation}}
\newcommand{\mults}[1]{\begin{multline*}#1\end{multline*}}
\newcommand{\mult}[2]{\begin{multline}\label{#1}#2\end{multline}}
\def\cG{\mathcal{G}}
\newcommand{\EE}[1]{\E(#1)}
\begin{document}
\author{Alan Frieze\thanks{Research supported in part by NSF grant DMS1952285}\\Department of Mathematical Sciences\\Carnegie Mellon University\\Pittsburgh PA 15213}

\title{The effect of adding randomly weighted edges}
\maketitle
\begin{abstract}
We consider the following question. We have a dense regular graph $G$ with degree $\a n$, where $\a>0$ is a constant. We add $m=o(n^2)$ random edges. The edges of the augmented graph $G(m)$ are given independent edge weights $X(e),e\in E(G(m))$. We estimate the minimum weight of some specified combinatorial structures. We show that in certain cases, we can obtain the same estimate as is known for the complete graph, but scaled by a factor $\a^{-1}$. We consider spanning trees, shortest paths and perfect matchings in (pseudo-random) bipartite graphs.
\end{abstract}
\section{Introduction}
It is often the case that adding some randomness to a combinatorial structure can lead to significant positive change. Perhaps the most important example of this and the inspiration for a lot of what has followed, is the seminal result of Spielman and Teng \cite{ST} on the performance of the simplex algorithm, see also Vershynin \cite{V} and Dadush and Huiberts \cite{DH}.

The paper \cite{ST} inspired the following model of Bohman, Frieze and Martin \cite{BFM}. They consider adding random edges to an arbitrary member $G$ of $\cG(\a)$. Here $\a$ is a positive constant and $\cG(\a)$ is the set of graphs with vertex set $[n]$ and minimum degree at least $\a n$. They show that adding $O(n)$ random edges to $G$ is enough to create a Hamilton cycle w.h.p. This is in contrast to the approximately $\frac12n\log n$ edges needed if we rely only on the random edges. Research on this model and its variations has been quite substantial, see for example \cite{BFKM}, \cite{KST}, \cite{SV},  \cite{KKS}, \cite{KKS1}, \cite{BMPP}, \cite{BTW}, \cite{MM}, \cite{BHY}, \cite{HZ}, \cite{DT}, \cite{P}, \cite{DRRS}, \cite{P1}.

Anastos and Frieze \cite{AF} introduced a variation on this theme by adding color to the edges. They consider rainbow Hamiltonicity and rainbow connection in the context of a randomly colored dense graph with the addition of randomly colored edges. Aigner-Horev and Hefetz \cite{AHH} strengthened the Hamiltonicity result of \cite{AF}.

In this paper we introduce another variation. We start with a dense graph in which each edge $e$ has a random weight $X(e)$ and add randomly weighted random edges. We study the effect on the minimum value of various combinatorial structures.  We will for simplicity restrict our attention to what we will call $\cG_{reg}(\a)$, the graphs in $\cG(\a)$ that are $\a n$ regular.
\subsection{Spanning Trees}
We start with spanning trees. Suppose that $G\in \cG_{reg}(\a)$ and each edge $e$ of $G$ is given an independent  random weight $X(e)$ chosen uniformly from $[0,1]$. Let $mst(G)$ denote the expected minimum weight of a spanning tree of $G$, assuming it is connected. Beveridge, Frieze and McDiarmid \cite{BFM0} and Frieze, Ruszinko and Thoma \cite{FRT} show that assuming certain connectivity conditions on $G$,
\beq{eq1a}{
mst(G)\approx \frac{\z(3)}{\a}\text{ as $n\to\infty$}.
}
where for a positive integer $k\geq 2$ we have $\z(k)=\sum_{n=1}^\infty n^{-k}$. 

Here $A_n\approx B_n$ if $A_n=(1+o(1))B_n$ as $n\to \infty$ and $A_n\lesssim B_n$ if $A_n\leq (1+o(1))B_n$ as $n\to \infty$ and $A_n\gg B_n$ if $A_n/B_n\to \infty$.

Now let $G(m)$ be obtained from $G$ by adding $m$ random edges to $G$. Each added random edge also has an independent uniform $[0,1]$ weight. Also, let $G(p)$ be obtained from $G$ by independently adding randomly weighted copies of edges not in $G$, with probability $p$. We let $R_m,R_p$ denote the added edges. Our first theorem is a simple extension of \eqref{eq1a}.
\begin{theorem}\label{th1}
Suppose that $G\in \cG_{reg}(\a)$ and $n\log n\ll m\ll n^{5/3}$ and the edges of $G(m)$ have independent weights chosen uniformly from $[0,1]$. Then w.h.p. 
\beq{eq2a}{
mst(G(m))\approx \frac{\z(3)}{\a}\text{ as $n\to\infty$}.
}
In addition, if $\a>1/2$ then \eqref{eq2a} holds without the use of random edges.
\end{theorem}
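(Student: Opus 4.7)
The starting point is the standard identity, valid for any connected graph $H$ with edge weights in $[0,1]$,
\begin{equation*}
mst(H) \;=\; \int_0^1 \bigl(\kappa(H_{\le t}) - 1\bigr)\, dt,
\end{equation*}
where $H_{\le t}$ denotes the subgraph of edges with weight at most $t$ and $\kappa$ counts components. The plan is to apply the prior estimate \eqref{eq1a} from \cite{BFM0, FRT}, which gives $mst \approx \z(3)/\a$ under appropriate pseudo-randomness or edge-expansion hypotheses. The task then reduces to verifying those hypotheses for the graph under consideration.

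For the $\a > 1/2$ statement: here $G$ itself is well-behaved. Every pair of vertices shares at least $(2\a-1)n$ common neighbors, so $G$ has diameter $2$, and every $S \seq V$ with $1 \le |S| \le n/2$ has edge-boundary $e(S, V \sm S) \ge (\a - \tfrac12)n|S|$. This linear edge expansion should suffice to meet the hypotheses of \cite{BFM0, FRT}, so \eqref{eq1a} applies to $G$ directly.

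For the general case ($n \log n \ll m \ll n^{5/3}$), I would show that $G(m)$ satisfies the same hypotheses w.h.p. The random edges repair any expansion failures of $G$: for an adversarial host (for instance a disjoint union of copies of $K_{\a n+1}$), certain cuts of $G$ carry no edges, but the $m \gg n \log n$ random edges cross every cut with room to spare, since the expected number of $R_m$-edges across a balanced cut is $\Theta(m) \gg n \log n$. A standard union bound together with Chernoff-type concentration over all subsets $S$ of all sizes gives edge expansion at every scale w.h.p. Meanwhile, the condition $m \ll n^{5/3}$ ensures that the maximum degree of $G(m)$ remains $(1+o(1))\a n$ w.h.p., so $G(m)$ is close to $\a n$-regular.

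The main obstacle will be matching the precise form of the pseudo-randomness hypotheses used in \cite{BFM0, FRT}. If those involve spectral or co-degree conditions beyond mere edge expansion, I would fall back on re-executing the local branching-process analysis of \cite{FRT} directly for $G(m)$. Split $\int_0^1$ at $t=C/(\a n)$ and $t = K\log n/n$; the large-$t$ regime contributes $0$ once $G(m)_{\le t}$ is connected (guaranteed by $m \gg n\log n$), the intermediate regime contributes $o(1)$ by the expansion argument above, and in the small-$t$ regime one writes $t=x/(\a n)$ and observes that a typical vertex of $G(m)$ has neighborhood-at-distance-$k$ in $G(m)_{\le t}$ approximated by the first $k$ generations of a Galton--Watson tree with $\text{Poisson}(x)$ offspring, so $\kappa(G(m)_{\le t}) \approx n\phi(x)$ for the usual tree-component density $\phi$ satisfying $\int_0^\infty \phi(x)\,dx = \z(3)$. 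The change of variables then produces the factor $1/\a$, giving $mst(G(m)) \approx \z(3)/\a$.
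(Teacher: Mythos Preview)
Your approach matches the paper's: it simply verifies the hypotheses of Theorem~2 of \cite{BFM0}, which are precisely (i) near-regularity $\a n\le \gd\le \D\le \a(1+O(n^{-1/3}))n$ (guaranteed by $m\ll n^{5/3}$, since each vertex gains $o(n^{2/3})$ random edges) and (ii) the edge-expansion condition $|S:\bar S|/|S|\ge n^{2/3}\log^{3/2}n$ for $\a n/2\le |S|\le n/2$ (guaranteed w.h.p.\ by the random edges alone via Chernoff plus a union bound, or by the $(\a-\tfrac12)n$ linear expansion when $\a>1/2$). Your fallback to rerunning the branching-process analysis is unnecessary, since the cited condition is pure edge-expansion rather than spectral or co-degree based.
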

This theorem is very easy to prove. One simply verifies that certain conditions in \cite{BFM0} hold w.h.p. On the other hand it sets the stage for what we are trying to prove in other scenarios. The upper bound on $m$ is not essential, we could most likely replace it by $o(n^2)$, but this would require us to re-do the calculations in \cite{BFM0}.

Without the addition of random edges, all that can be claimed (assuming $G$ is connected) is that
\beq{known}{
\frac{\z(3)}{\a}\lesssim mst(G)\lesssim \frac{\z(3)+1}{\a}.
}
See \cite{FRT}.\\
{\bf Conjecture:} The +1 in \eqref{known} can be replaced by +1/2 (which is best possible).

The example giving 1/2 is a collection of $n/r$ copies of $H=K_r-e,r=\a n$ where there is a perfect matching on the vertices of degree $r-2$ added so that the copies of $H$ are connected in a cycle by bridges.
\subsection{Shortest paths}
We turn our attention next to shortest paths. Janson \cite{Jan} considered the following scenario: the edges of $K_n$ are given independent exponential mean one random lengths, denoted by $E(1)$. Let $d_{i,j}$ denote the shortest distance between vertex $i$ and vertex $j$. He shows that w.h.p.
\[
d_{1,2}\approx \frac{\log n}{n},\quad \max_{j\in [n]}d_{1,j}\approx \frac{2\log n}{n},\quad \max_{i,j}d_{i,j}\approx \frac{3\log n}{n}.
\] 
Bhamidi and van der Hofstad \cite{BH} proved an equivalent expression for $d_{1,2}$ for a much wider class of distribution. They actually determined an asymptotic limiting distribution. (See also Bhamidi, van der Hofstad and Hooghiemstra \cite{BHH}.) We prove the following:
\begin{theorem}\label{th2}
Suppose that $n^2/\log n\ll m\ll n^2$ and that $G\in G_{reg}(\a)$ and the edges of $G(m)$ are given independent exponential mean one random lengths. Let $d_{i,j}$ denote the shortest distance between vertex $i$ and vertex $j$. Then w.h.p.
\[
d_{1,2}\approx \frac{\log n}{\a n},\quad \max_{j\in [n]}d_{1,j}\approx \frac{2\log n}{\a n},\quad \max_{i,j\in [n]}d_{i,j}\approx \frac{3\log n}{\a n}.
\]
In addition, if $\a>1/2$ then \eqref{eq2a} holds without the use of random edges.
\end{theorem}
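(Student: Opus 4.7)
The plan is to adapt Janson's \cite{Jan} continuous-time Dijkstra exploration to the graph $G(m)$. In Janson's argument, the memoryless property of $E(1)$ weights means that, at each stage of Dijkstra's algorithm from a fixed source vertex, if $S_k\subseteq[n]$ is the set of $k$ vertices so far explored, then the waiting time $T_{k+1}-T_k$ until the next vertex is revealed is exponential with rate $N_k := e_{G(m)}(S_k,\bar S_k)$. In $K_n$ one has $N_k=k(n-k)$ deterministically; in our setting the target is $N_k\approx \a k(n-k)$, yielding $T_n\approx 2\log n/(\a n)$ after summing $\sum_k 1/N_k$.

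The central technical step is a pseudo-randomness lemma: whp, for every $S\subseteq[n]$ encountered in the exploration, $e_{G(m)}(S,\bar S)=(1+o(1))\a|S|(n-|S|)$. For sub-linear $|S|$, the $\a n$-regularity of $G$ alone suffices: $e_G(S,\bar S)\geq \a n|S|-|S|(|S|-1)=(1+o(1))\a n|S|$. For sets of linear size, the worst-case $G$-cut can be very small (for instance, in a disjoint union of $\a n$-cliques, a union of whole cliques has zero boundary), and we rely on the random edges: Chernoff plus a union bound gives, for $m\gg n^2/\log n$, that whp $e_{R_m}(S,\bar S)=(1+o(1))\cdot 2m|S|(n-|S|)/n^2$ for all $S$ with $|S|(n-|S|)\gg n^2(\log n)/m$. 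Crucially, the $S_k$'s in Dijkstra are ``random-like'' because the edge weights are i.i.d., so that the $G$-cut concentrates around its mean $\a|S|(n-|S|)$; this yields $N_k=(1+o(1))\a k(n-k)$ throughout the exploration.

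Once the cut estimate is in place, Janson's computation goes through with every $k(n-k)$ replaced by $\a k(n-k)$. Thus $T_n=\sum_k 1/N_k\approx (2\log n)/(\a n)$, with concentration from Chebyshev applied to the sum of conditionally independent exponentials. For $d_{1,2}$: by exchangeability of the other vertices in the exploration (the edge-weights, not vertex identities, drive the process), vertex $2$ arrives at a position $K$ approximately uniform on $\{1,\ldots,n-1\}$, and writing $T_k\approx (H_{k-1}+H_{n-1}-H_{n-k})/(\a n)$ shows that $T_K$ concentrates at $(\log n)/(\a n)$ since the tails of $K$ contribute only $o(1)$ (e.g., $\Pr[T_K\leq (1-\e)\log n/(\a n)]\leq \Pr[K\leq n^{1-\e}]\to 0$). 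The matching lower bound is via a first-moment path-counting argument: since whp $G(m)$ has maximum degree $(1+o(1))\a n$, there are at most $((1+o(1))\a n)^{L-1}$ paths of length $L$ from $1$ to $2$, each with total weight having the Erlang$(L,1)$ distribution, so
\[
\Pr[d_{1,2}\leq t]\leq \sum_{L\geq 1}((1+o(1))\a n)^{L-1}\frac{t^L}{L!}=\frac{1+o(1)}{\a n}\Bigl(e^{(1+o(1))\a nt}-1\Bigr),
\]
which is $o(1)$ for $t\leq(1-\e)(\log n)/(\a n)$.

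For $\max_{i,j}d_{i,j}\approx (3\log n)/(\a n)$, the analogous two-sided exploration argument of \cite{Jan} applies: grow balls from $i$ and $j$ simultaneously until they meet, with a union bound over pairs; the dominant terms and Gumbel-type fluctuations each scale by $\a^{-1}$, turning Janson's $3\log n/n$ into $3\log n/(\a n)$. The principal obstacle is the pseudo-randomness lemma, specifically verifying $N_k\geq(1-o(1))\a k(n-k)$ when $S_k$ may be structured in $G$; the careful combination of the deterministic $G$-contribution with the $R_m$-contribution, union-bounded over all relevant $S$, is where the hypothesis $m\gg n^2/\log n$ is essential. For $\a>1/2$, $G$ alone already satisfies the requisite edge-distribution properties (any two vertices share a common neighbor, and cut sizes $e_G(S,\bar S)$ are sufficiently close to $\a|S|(n-|S|)$ via a Chung--Graham--Wilson--type argument), so the conclusion holds without random edges.
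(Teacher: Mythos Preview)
Your overall plan---adapt Janson's Dijkstra exploration with $N_k$ in place of $k(n-k)$---matches the paper's, and several of your sub-arguments (e.g.\ the first-moment path count for the $d_{1,2}$ lower bound) are clean and correct. But the central ``pseudo-randomness lemma'' is wrong as stated, and the handwave supporting it is invalid.

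You assert $N_k=(1+o(1))\a k(n-k)$ \emph{throughout} the exploration, justified by ``the $S_k$'s in Dijkstra are random-like because the edge weights are i.i.d.'' This fails. Take $G$ to be a disjoint union of $\a n$-cliques. The exploration from vertex $1$ will w.h.p.\ exhaust most of the clique $C_1\ni 1$ before leaving it: at each step a vertex of $C_1$ has $\Theta(\a n)$ $G$-edges into $S_k$ while a vertex outside $C_1$ has only $O(kp)=o(k)$ random edges into $S_k$. Hence when $k=\a n/2$ the set $S_k$ lies almost entirely inside $C_1$ and $e_G(S_k,\bar S_k)\approx (\a n/2)^2$, not $\a\cdot(\a n/2)(n-\a n/2)$; these differ by the constant factor $2-\a$. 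The $S_k$'s are \emph{not} uniformly random subsets---they are shaped by the structure of $G$---and the random edges do not repair this: they contribute only $pk(n-k)=o(k(n-k))$ since $p=o(1)$, far short of $\a k(n-k)$.

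The paper never claims $N_k\approx\a k(n-k)$ in the middle range. Instead: for $k=o(n)$ and $n-k=o(n)$ regularity alone gives $\n_k\approx\a n\min(k,n-k)$, and these tails already yield the full $(2\log n)/(\a n)$; for $k$ and $n-k$ both linear one uses only the random-edge lower bound $\n_k\geq(1-o(1))pk(n-k)$ and checks that $\sum_{k=n/\om}^{n-n/\om}\frac{1}{pk(n-k)}=O\bigl(\frac{\log\om}{\om}\cdot\frac{\log n}{n}\bigr)=o(\log n/n)$. This is exactly where $m\gg n^2/\log n$ (equivalently $p\gg 1/\log n$) is used. For the $d_{1,2}$ and diameter upper bounds the paper also relies on a bidirectional device: grow balls of size $n^{1/2+o(1)}$ from each endpoint and bridge them by a single cheap random edge.

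Your treatment of $\a>1/2$ has the same flaw: a Chung--Graham--Wilson cut estimate requires pseudorandomness, which is not implied by $\a n$-regularity with $\a>1/2$. The paper instead uses that any two vertices share at least $(2\a-1)n$ common $G$-neighbours, so wherever a cheap random bridging edge was needed one substitutes a cheap $G$-path of length two, handled via Janson's inequality.
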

\subsection{Bipartite matchings}\label{bip}
We turn our attention next to bipartite matchings. For background consider the following well-studied problem: each edge of the complete bipartite graph $K_{n,n}$ is given an independent edge weight $X(e)$. Let $C_n$ denote the minimum weight of a perfect matching in this context. Walkup \cite{W80a} considered the case where $X(e)$ is uniform $[0,1]$ and  proved that $\EE{C_n}\leq 3$. Later Karp \cite{K87} proved that $\EE{C_n}\leq 2$. Aldous \cite{A92,A01} proved that if the  $X(e)$ are independent exponential mean one random variables then  $\lim_{n\to\infty}\EE{C_n}=\z(2)=\sum_{k=1}^\infty\frac{1}{k^2}$. Parisi \cite{P98} conjectured that in fact $\EE{C_n}= \sum_{k=1}^{n}\frac{1}{k^2}$. This was proved independently by Linusson and W\"astlund \cite{LW04} and by  Nair, Prabhakar and Sharma \cite{NPS05}. A short elegant proof was given by W\"astlund \cite{W1,W2}. 

We now consider $G(m)$. $G$ is an $\a n$ regular bipartite graph with vertex set $A\cup B, |A|=|B|=n$. Unfortunately, our proof only works if $G$ is {\em pseudo-random}, as defined by Thomason \cite{T}. By this we mean that for some $0<\e<1$ we have
\beq{quasi}{
|co-degree(u,v)-\a^2n|\leq \m=O(n^{1-\e})\quad\text{ for all }u,v\in A.
}
Here, as usual, $co-degree(u,v)=|\set{w\in B:(u,w),(v,w)\in E(G)}$. 
\begin{theorem}\label{th3}
Let $G$ be a pseudo-random $\a n$ regular bipartite graph with vertex set $A\cup B, |A|=|B|=n$. Suppose that $n^{49/25}\ll m=o(n^2)$. Let $C_n$ denote the minimum weight of a perfect matching when the weights of the edges of $G(m)$ are independent exponential mean one random variables. (To be clear, the $m$ added random edges have endpoints in the different vertex classes $A$ and $B$, so that $G(m)$ is bipartite.) Then
\beq{Cn}{
\E(C_n)\approx \frac{\z(2)}{\a}=\frac{\p^2}{6\a}.
}
\end{theorem}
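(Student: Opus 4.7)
The plan is to adapt one of the existing proofs of the Aldous--Parisi $\z(2)$ theorem for $K_{n,n}$, capitalising on the observation that each vertex of $G(m)$ has degree $(1+o(1))\a n$ (since $m=o(n^2)$ adds negligibly to any single vertex's degree) and each pair of vertices on the same side has co-degree $(1+o(1))\a^2 n$ by \eqref{quasi}. Locally, $G(m)$ thus resembles a complete bipartite graph with effective vertex-degree $\a n$, so that the exponential order statistics at each vertex are scaled by $1/\a$ and the minimum matching weight should correspondingly scale by $1/\a$.

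For the upper bound $\E(C_n) \leq (1+o(1))\z(2)/\a$, I would follow a W\"astlund-style combinatorial proof. In the $K_{n,n}$ argument one adds vertices to one side one at a time and computes the expected increment of the minimum matching weight by means of an exchange argument that reroutes a matching edge along an augmenting $3$-path through an alternative vertex. In our setting this alternative vertex must lie in the common neighbourhood of a specified pair, and \eqref{quasi} supplies the needed $\approx \a^2 n$ options; replacing $n$ by $\a n$ in the probabilities produces the factor $1/\a$ in the resulting telescoping sum. Only edges of weight at most $T = C\log n /(\a n)$ are needed w.h.p., and on this thresholded subgraph $G(m)$ behaves like a random bipartite graph of edge density $T$.

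For the matching lower bound, I would use LP duality: the minimum matching weight equals $\max (\sum_i u_i + \sum_j v_j)$ subject to $u_i + v_j \leq X(ij)$ for each $ij \in E(G(m))$. A feasible dual can be built from the order statistics of the exponential edge weights at each vertex. Since vertex $v$ has $(1+o(1))\a n$ incident edges, the $k$-th smallest weight at $v$ has expectation $\approx (1/\a n)\sum_{j=0}^{k-1}(1-j/(\a n))^{-1}$, and the Parisi-style accounting over the $n$ matching edges yields the desired $(1+o(1))\z(2)/\a$ lower bound. Concentration of $C_n$ around its mean follows from standard exponential tail bounds and a bounded-differences argument.

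The main obstacle will be controlling how the co-degree error $\m = O(n^{1-\e})$ propagates through the $n$ exchange steps of the upper-bound argument, where the exchangeability of $K_{n,n}$ is replaced by only approximate symmetry: errors of size $\m/n$ per step could in principle accumulate to $\Om(1)$. This is presumably where the hypothesis $m \gg n^{49/25}$ enters: the random edges of $R_m$ smooth out any residual asymmetry of the pseudo-random graph $G$, and the specific exponent should arise from balancing the extra randomness needed to absorb the $O(n^{1-\e})$ co-degree errors against the requirement that the random edges do not perturb the global density away from $\a$.
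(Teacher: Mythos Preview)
Your upper-bound sketch is in the right family --- the paper does follow W\"astlund, adding the $A$-vertices one at a time and writing $\E(C(n,r)-C(n,r-1))$ as $r^{-1}\sum_{i\le r}\E(1/\gd_i)$, where $\gd_i$ is the number of $G(m)$-neighbours that the ``pivot'' vertex $a_\s$ has in $B\setminus B_{i-1}$ at step $i$. But your description in terms of augmenting $3$-paths and common neighbourhoods is not the mechanism; the key quantity is this single degree $\gd_i$, and the pseudo-randomness enters via Thomason's discrepancy bound $|e(X,Y)-\a|X||Y||\le (|X||Y|(\a n+\m|X|))^{1/2}$ to pin down $\sum_i\gd_i$ on intervals and hence control $\sum_i 1/\gd_i$ from \emph{both} sides. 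In particular the paper does not use a separate LP-duality argument for the lower bound: it gets the lower bound from the same telescoping identity, bounding $\sum 1/\gd_i$ from below by the harmonic--arithmetic mean inequality applied on blocks. Your dual construction from vertex order statistics is vague as written and it is not clear it yields a feasible dual with value $(1+o(1))\z(2)/\a$; you would need to say exactly what $u_i,v_j$ are and why $u_i+v_j\le X(ij)$ holds.

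More importantly, you have misidentified the role of the random edges $R_m$, and this is the real gap. The co-degree error $\m=O(n^{1-\e})$ is handled entirely by Thomason's inequality applied to $G$ itself; $R_m$ contributes nothing there. What the random edges are actually needed for is an expansion/augmenting-path lemma showing that w.h.p.\ no edge of weight exceeding $w_1=O(\log^2 n/(np))$ is ever used in any $M_r$: one shows that for every $a$ and every $r$ there is an alternating path from $a$ to $B\setminus B_r$ using only edges of weight $\le w_0=O(\log n/(np))$, and the required expansion of low-weight neighbourhoods comes from $R_p$, not from $G$. This ``no long edges'' lemma is what makes the tail terms (the last $r_0$ increments, and the rare steps where $\gd_r$ is atypically small) negligible and is also what underlies the bound $\z_a\le\n_2=\log^3 n/p$ on how often a given vertex can be the pivot. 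The exponent $49/25$ then falls out of a system of constraints on three parameters $(\b,\g,\eta)$ with $p=n^{-\eta}$: roughly, one needs $\eta$ small enough that $w_1 r_0=o(1)$ and $\n_2$ is controlled, but the paper's balancing gives $\eta=1/25$. Without this augmenting-path ingredient your plan has no way to control the contribution of the final $o(n)$ increments or to rule out heavy edges in the optimum, and nothing in your outline substitutes for it.
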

{\bf Conjecture:} equation \eqref{Cn} holds for $G(m)$, $m=o(n^2)$ growing sufficiently quickly, but without the assumption of pseudo-randomness.

Frieze and Johansson \cite{FJ} showed that if $G$ is the random bipartite graph $K_{n,n,p}$ where $np\gg \log^2n$ then 
\beq{psi}{
\E(C_n)\approx \frac{\p^2}{6p}.
} 
That paper also conjectured that if $(G_n)$ is a sequence of $r=r(n)$-regular bipartite graphs with $n+n$ vertices then $\E(C_n)\approx \frac{n\p^2}{6r}$ as $r,n\to\infty$. This conjecture is false. Instead we have:\\
{\bf Conjecture:} $\E(C_n)\approx \frac{n\p^2}{6r}$ if the connectivity of $G_n$ tends to infinity. Also, in general $\E(C_n)\lesssim \frac{n}{r}\brac{\frac{\p^2}{6}+\frac12}$.

The 1/2 here is best possible in general. We take $n/r$ copies of $H=K_{r,r}-e$ where there is a perfect matching on the vertices of degree $r-1$ added so that the copies of $H$ are connected in a cycle by bridges.

In what follows we will sometimes treat large values as integers when strictly speaking we should round up or down. In all cases the choice of up or down has negligible effect on the proof.

\section{Spanning Trees}
Theorem 2 of Beveridge, Frieze and McDiarmid \cite{BFM0} yields the following. Suppose that 
\beq{eq3}{
\a n\leq \gd(G)\leq\D(G)\leq \a(1+O(n^{-1/3})))n.
}
Let $S:\bar{S}$ denote the set of edges of $G$ with exactly one endpoint in $S$. Then \eqref{eq2a} holds if 
\beq{expand}{
\frac{|S:\bar{S}|}{|S|}\geq n^{2/3}\log^{3/2}n\text{ for all }S\subseteq [n], \frac{\a n}{2}\leq |S|\leq \frac{n}{2}.
}
Now if we add $m$ random edges satisfying the conditions of the theorem then all degrees will be $\a n+o(n^{2/3})$ and this will satisfy \eqref{eq3}. 

So, to prove Theorem \ref{th1}, all we need to do is to verify \eqref{expand}. Now let $p=\frac{m}{\binom{n}{2}}\gg\frac{\log n}{n}$. The probability that $G(p)$ contains a set failing to satisfy \eqref{expand} can be bounded by
\beq{eq4}{
\sum_{s=\a n/2}^{n/2}\binom{n}{s}\Pr(Bin(sn/2,p)\leq sn^{2/3}\log^{3/2}n)\leq \sum_{s=\a n/2}^{n/2}\bfrac{ne}{s}^s e^{-snp/10}=o(1),
}
where we have just looked at the edges $R_p$ to satisfy \eqref{expand}. The property described in \eqref{expand} is monotone increasing and so the $o(1)$ upper bound in \eqref{eq4} holds in $G(m)$ as well, see for example Lemma 1.3 of \cite{FK}. 

Finally note that if $\a>1/2$ and $S$ is as in \eqref{expand} then each $v\in S$ has at least $\e n$ neighbors in $\bar{S}$. And therefore $|S:\bar{S}|/|S|\geq \e n$. This completes the proof of Theorem \ref{th1}.
\section{Shortest Paths}
We use the ideas of Janson \cite{Jan}. Sometimes we make a small tweak and in one case we shorten his proof considerably. The case $\a>1/2$ will be discussed at the end of this section. We note that the lower bounds hold a fortiori if we do not have random edges $R_p$. 
\subsection{$d_{1,2}$}\label{sec1}
We set $S_1=\set{1}$ and $d_1=0$ and consider running Dijkstra's shortest path algorithm \cite{Dijk}. At the end of Step $k$ we will have computed $S_k=\set{1=v_1,v_2,\ldots,v_k}$ and $0=d_1,d_2,\ldots,d_k$ where $d_i$ is the minimum length of a path from 1 to $v_i,i=1,2,\ldots,k$. Let there be $\n_k$ edges from $S_k$ to $[n]\setminus S_k$. Arguing as in \cite{Jan} we see that $d_{k+1}-d_k=Z_k$ where $Z_k$ is the minimum of $\n_k$ independent exponential mean one random variables, independent of $d_k$. We note that
\beq{Zk}{
\E(Z_k\mid \n_k)=\frac{1}{\n_k}\text{ and }\Var(Z_k\mid \n_k)=\frac{1}{\n_k^2}.
}
Suppose now that 
\[
m=\frac{\om n^2}{\log n}\text{ where }1\ll \om\ll \log n.
\] 
It follows that w.h.p. $\gd(G(m))\approx\D(G(m))\approx \a n$. Now
\[
k\gd-2\binom{k}{2}\leq \n_k\leq k\D(G(m))
\]
and so
\beq{nuk}{
\text{w.h.p. $\n_k\approx k\a n$ for $k=o(n)$.}
}
Conditioning on the set of added edges and taking expectations with respect to edge weights, we see that if $1\ll k=o(n)$ then
\beq{meank0}{
\E(d_k)=\E\brac{\sum_{i=1}^{k-1}\frac{1}{\n_i}} \approx \sum_{i=1}^{k-1}\frac{1}{i\a n} \approx \frac{\log k}{\a n}.
}
By the same token,
\beq{vark0}{
\Var(d_k)\approx \sum_{i=1}^{k-1}\frac{1}{i^2\a^2n^2}=O(n^{-2}).
}
\subsubsection{Upper Bound}
If $k_0=n^{1/2}\om^{1/2}$ then w.h.p. $d_k\lesssim \frac{\log n}{2\a n}$ for $0\leq k\leq k_0$. Now execute Dijkstra's algorithm from vertex 2 and let $\hd_k,T_k$ correspond to $d_k,S_k$. If $S_{k_0}\cap T_{k_0}\neq \emptyset$ then we already have $d_{1,2}\lesssim \frac{\log n}{\a n}$. If $S_{k_0},T_{k_0}$ are disjoint then we use the random edges $R_m$ or $R_p$. Let $p=m/\binom{n}{2}\approx 2\om/\log n$. Then,
\mult{SP1}{
\Pr\brac{\not\exists e\in R_p\cap (S_{k_0}:T_{k_0}):X(e)\leq \frac{\log n}{\om n}}\leq \brac{1-p\brac{1-\exp\set{-\frac{\log n}{\om n}}}}^{k_0^2}\\
=\brac{1-(1+o(1))\frac{p\log n}{\om n}}^{k_0^2}\leq \exp\set{-\frac{k_0^2p\log n}{2\om n}}=e^{-\om}.
}
So, in this case we see too that w.h.p. 
\[
d_{1,2}\leq (1+o(1))\brac{\frac{\log n}{2\a n}+\frac{\log n}{2\a n}}+\frac{\log n}{\om n}\approx  \frac{\log n}{\a n}.
\]
\subsubsection{Lower Bound}
We now consider a lower bound for $d_{1,2}$. Let $k_1=n^{1/2}/\log n$. We observe that because w.h.p. all vertices have degree $\approx \a n$ and because the edge joining $v_{k+1}$ to $S_k$ is uniform among $S_k:\bar{S}_k$ edges, we see that $\Pr(2\in S_{k_1})=O(k_1/n)=o(1)$. By the same token, $\Pr(T_{k_1}\cap S_{k_1}\neq \emptyset)=O(k_1^2/n)=o(1)$. It follows that w.h.p. 
\[
d_{1,2}\gtrsim 2\frac{\log k_1}{\a n}\approx \frac{\log n}{\a n}.
\]
\subsection{$\max_jd_{1,j}$}
\subsubsection{Lower Bound}
For this we run Dijkstra's algorithm until all vertices have been included in the shortest path tree. We can therefore immediately see that if $k_2=n/\log n$ then
\beq{low2}{
\E(\max_jd_{1,j})\gtrsim\sum_{i=1}^{k_2}\frac{1}{i\a n}+\sum_{i=n-k_2+1}^{n-1}\frac{1}{(n-i)\a n}\approx \frac{2\log n}{\a n}.
}
The second sum in \eqref{low2} is the contribution from adding the final $k_2$ vertices and uses $\n_{n-i}\approx (n-i)\a n$ w.h.p. for $i=o(n)$. Equation \eqref{vark0} allows us to claim the lower bound w.h.p.
\subsubsection{Upper Bound}
For an upper bound we use the fact that w.h.p. there are approximately $i(n-i)p$ $R_p$ edges between $S_i$ and $\bar{S}_i$ in order to show that if $k_2=n/\om$ then
\mult{upp1}{
\E(\max_jd_{1,j})\lesssim\brac{\frac{2\log n}{\a n}+\sum_{i=k_2+1}^{n-k_2}\frac{1}{i(n-i)p}}\\
 \approx
\frac{2\log n}{\a n}+\frac{\log n}{2\om n}\sum_{i=k_2+1}^{n-k_2}\brac{\frac{1}{i}+\frac{1}{n-i}} = \frac{2\log n}{\a n}\brac{1+\frac{(\a+o(1))\log \om}{2\om}}\approx \frac{2\log n}{\a n}.
}
Equations \eqref{low2} and \eqref{upp1} imply that $\E(\max_jd_{1,j})\approx \frac{2\log n}{\a n}$ and we can use equation \eqref{vark0} to get concentration around the mean.
\subsection{$\max_{i,j}d_{i,j}$}
\subsubsection{Lower Bound}
Our proof here is somewhat shorter than that in \cite{Jan}, but it is based on the same idea. We begin with a lower bound. Let $Y_v=\min\set{X(e):e=\set{v,w}\in G(m)}$.  Let $A=\set{v:Y_v\geq \frac{(1-\e)\log n}{\a n}}$. Then, given that all vertex degrees are asymptotically equal to $\a n$, we have that for $v\in [n]$,
\beq{A}{
\Pr(v\in A)=\exp\set{-(\a n+o(n))\frac{(1-\e)\log n}{\a n}}=n^{-1+\e+o(1)}.
}
An application of the Chebyshev inequality shows that $|A|\approx n^{\e+o(1)}$ w.h.p. and we can assume the existence of $a_1\neq a_2\in A$. Now the expected number of paths from $a_1$ to $a_2$ of length at most $\frac{(3-2\e)\log n}{\a n}$ can be bounded by
\beq{a1a2path}{
n^{2\e+o(1)}\times n^2\times n^{-3\e+o(1)}\times \frac{\log^2n}{\a^2n^2}=n^{-\e+o(1)}.
}
{\bf Explanation for \eqref{a1a2path}:} The first factor $n^{2\e+o(1)}$ is the expected number of pairs of vertices $a_1,a_2\in A$. The second factor is a bound on the number of choices $b_1,b_2$ for the neighbors of $a_1,a_2$ on the path. The third factor $F_3$ is a bound on the expected number of paths of length at most $\frac{\b\log n}{\a n}$ from $b_1$ to $b_2$, $\b=1-3\e$. This factor comes from
\[
F_3\leq \sum_{\ell\geq 0}((\a+o(1)n)^\ell \bfrac{\b\log n}{\a n}^{\ell+1}\frac{1}{(\ell+1)!}.
\]
Here $\ell$ is the number of internal vertices on the path. There will be $((\a+o(1))n)^\ell$ choices for the sequence of vertices on the path. We then use the fact that the exponential mean one random variable stochastically dominates the uniform $[0,1]$ random variable $U$. The final two factors are the probability that the sum of $\ell+1$ independent copies of $U$ sum to at most $\frac{\b\log n}{\a n}$. Continuing we have
\[
F_3\leq \sum_{\ell\geq 0}\frac{\b\log n}{\a n(\ell+1)}\bfrac{e^{1+o(1)}\b\log n}{\ell}^\ell \leq \frac{\b\log n}{\a n} \brac{\sum_{\ell=0}^{10\log n}n^{\b+o(1)}+\sum_{\ell>10\log n}e^{-\ell}}=n^{-1+\b+o(1)}=n^{-3\e+o(1)}.
\]
The final factor in \eqref{a1a2path} is a bound on the probability that $X_{a_1b_1}+X_{a_2b_2}\leq \frac{(2+\e)\log n}{\a n}$. For this we use the fact that $X_{a_ib_i},i=1,2$ is distributed as $\frac{(1-\e)\log n}{\a n}+E_i$ where $E_1,E_2$ are independent exponential mean one. Now $\Pr(E_1+E_2\leq t)\leq (1-e^{-t})^2\leq t^2$ and taking $t=\frac{3\e\log n}{\a n}$ justifies the final factor of \eqref{a1a2path}.

It follows from \eqref{a1a2path} and the Markov inequality that the shortest distance between a pair of vertices in $A$ is at least $\frac{(3-2\e)\log n}{\a n}$ w.h.p., completing our proof of the lower bound in Theorem \ref{th2}. 
\subsubsection{Upper Bound}
We now consider the upper bound. Let $Y_1=d_{k_3}$ where $d_k$ is from Section \ref{sec1} and $k_3=n^{1/2}\log n$. For $t<1-\frac{1+o(1)}{\a n}$ we have that w.h.p. over our choice of $R_m$, that
\[
\E(e^{t\a nY_1})=\E\brac{\exp\set{\sum_{i=1}^{k_3}\a tnZ_i}}= \prod_{i=1}^{k_3}\brac{1-\frac{(1+o(1))t}{i}}^{-1},
\]
where the $Z_i$ are as in \eqref{Zk}.

Then for any $\b>0$ we have
\mults{
\Pr\brac{Y_1\geq \frac{\b\log n}{\a n}}\leq \E(e^{t\a nY_1-t\b\log n}) \leq e^{-t\b\log n}\prod_{i=1}^{k_3}\brac{1-\frac{(1+o(1))t}{i}}^{-1}\\
=e^{-t\b\log n}\exp\set{\sum_{i=1}^{k_3}\frac{(1+o(1))t}{i}+O\bfrac{t}{i^2}} = \exp\set{\brac{\frac12+o(1)-\b}t\log n}. 
}
It follows, on taking $\b=3/2+o(1)$ that w.h.p. 
\[
Y_j\leq \frac{(3+o(1))\log n}{2\a n}\text{ for all }j\in [n]. 
\]
Letting $T_j$ be the set corresponding to $S_{k_3}$ when we execute Dijkstra's algorithm starting at $j$, then we have that for $j\neq k$ where $T_j\cap T_k=\emptyset$,
\beq{SP2}{
\Pr\brac{\not\exists e\in R_p\cap (T_j:T_k):X(e)\leq \frac{\log n}{\om n}}\leq \exp\set{-\frac{(1+o(1))k_3^2p\log n}{\om n}}=e^{-(2+o(1))\log^2n}=o(n^{-2})
}
and this is enough to complete the proof of Theorem \ref{th2}, except for when $\a>1/2$ and we do not add  random edges.
\subsection{$\a>1/2$}
The $R_p$ edges are needed for \eqref{SP1}, \eqref{upp1} and \eqref{SP2}. In each case we are two sets $S,T$ of size $s=o(n)$ say and we need to argue for a short edge between them. In our case we look for a short path of length two. So, let $X$ denote the number of triples $a,b,x$ where $a\in S,b\in T$ and $x\notin S\cup T$ and the lengths of edges $\set{a,x},\set{b,x}$ are both at most $p=\frac{\log n}{\om n}$. Let $\cT$  denote the set of such triples, so that $X=|\cT|$. The lengths of candidate edges will not be conditioned by the history of the process. We use Janson's inequality \cite{Jan1}. 

Each pair $a\in S,b\in T$ have at least $2\e n$ common neighbors. It follows that 
\[
\E(X)\geq s^2\e np^2.
\]
We then estimate 
\[
\D=\sum_{(a,b,x)\sim(a',b',x')}\Pr((a,b,x),(a',b',x')\in\cT),
\]
where $(a,b,x)\sim(a',b',x')$ if $\set{a,x}=\set{a',x'}$ or $\set{b,x}=\set{b',x'}$.

Then,
\[
\D\leq \E(X)+2s^2np^2\times sp
\]
Then Janson's inequality implies
\beq{X=0}{
\Pr(X=0)\leq \exp\set{-\frac{\E(X)^2}{2\D}}\leq \exp\set{-\frac{s^4\e^2n^2p^4}{s^2np^2+4s^3np^3}} =e^{-\Omega(snp)}=e^{-\Omega(s\log n/\om)}.
}
In all cases considered, $s\geq n^{1/2+o(1)}$ an so the RHS of \eqref{X=0} is $o(n^{-1})$, completing the proof of Theorem \ref{th2} for the case where $\a>1/2$.
\section{Bipartite matchings}\label{BIP}
We find, just as in \cite{FJ}, that the proofs in \cite{W1}, \cite{W2} can be adapted to our current situation. Suppose that the vertices of $ G$ are denoted $A=\set{a_i,i\in [n]}$ and $B=\set{b_j,j\in [n]}$. We will need to assume that 
\[
a_1,a_2,\ldots,a_n\text{ constitutes a random ordering of the vertices in }A.
\]
We will use the notation $(a,b)$ for edges of $G$, where $a\in A$ and $b\in B$. We will let $w(a,b)$ denote the weight of $(a,b)$. Let $A_r=\set{a_1,a_2,\ldots,a_r}$ and let $C(n,r)$ denote the weight of the minimum weight matching of $M_r$ of $A_r$ into $B$. ($M_r$ is unique with probability one.) Suppose also that $\f_r$ is defined by $M_r=\set{(a_i,\f_r(a_i)):\,i=1,2,\ldots,r}$. Let $B_r=\set{\f_r(a_i):\,i=1,2,\ldots,r}$.

We will prove that
\beq{eq1}{
\E(C(n,r)-C(n,r-1))\approx \frac1{\a}\sum_{i=1}^{r}\frac{1}{r(n-i+1)}.
}
for $r=1,2,\ldots,n-o(n)$.

Using this and a simple argument for $r\geq n-o(n)$ we argue that
\beq{eq2}{
\E(C_n)=\EE{C(n,n)}\approx \frac{1}{\a} \sum_{r=1}^{n}\sum_{i=1}^{r}\frac{1}{r(n-i+1)} \approx\frac{1}{\a}\sum_{k=1}^\infty \frac{1}{k^2}=\frac{\p^2}{6\a}.
}
\subsection{Proof details}
We add a special vertex $b_{n+1}$ to $B$, with edges to all $n$ vertices of $A$. Each edge adjacent to $b_{n+1}$ is assigned an $E(\l)$ weight independently, $\l>0$. Here $E(\l)$ is an exponential random variable of rate $\l$ i.e. $\Pr(E(\l)\geq x)=e^{-\l x}$. We now consider $M_r$ to be the minimum weight matching of $A_r$ into $B^*=B\cup \set{b_{n+1}}$. (As $\l\to 0$ it becomes increasingly unlikely that any of the extra edges are actually used in the minimum weight matching.) We denote this matching by $M_r^*$ and we let $B_r^*$ denote the corresponding set of vertices of $B^*$ that are covered by $M_r^*$.

Define $P(n, r)$ as the normalized probability that $b_{n+1}$ participates in $M_r^*$, i.e.
\begin{equation}
 P(n, r) = \lim_{\l \rightarrow 0} \frac{\Pr(b_{n+1}\in B_r^*)}{\l}.
\end{equation}
Its importance lies in the following lemma:
\begin{lemma}\label{lem10a}
\begin{equation}
\EE{C(n, r) - C(n, r-1)} = \frac{P(n, r)}{r}.
\end{equation}
\end{lemma}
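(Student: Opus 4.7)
I will adapt the W\"astlund argument from \cite{W1,W2} (essentially as in \cite{FJ}) to the bipartite graph with a sink. Fix $r\le n$ and, for each $i\in[r]$, let $C^{(i)}(A_r)$ denote the minimum weight of a matching of $A_r\sm\set{a_i}$ into $B$ using only edges of $G$.

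\textbf{Step 1 (structural characterization).} I will show that $b_{n+1}$ is matched to $a_i$ in $M_r^*$ iff
\[
C^{(i)}(A_r)+w(a_i,b_{n+1})\;<\;\min\!\brac{C(n,r),\,\min_{j\neq i}\brac{C^{(j)}(A_r)+w(a_j,b_{n+1})}}.
\]
The reason is that once it is decided that $b_{n+1}$ receives $a_i$, the remaining $r-1$ vertices of $A_r$ must be matched optimally into $B$ at cost $C^{(i)}(A_r)$; the first comparison picks the best sink-partner, and the second rules out not using $b_{n+1}$ at all. On this event the weight of $M_r^*$ equals $C^{(i)}(A_r)+w(a_i,b_{n+1})$.

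\textbf{Step 2 (linearization in $\l$).} Since $w(a_i,b_{n+1})$ is $E(\l)$ and independent of all other weights, $\Pr(w(a_i,b_{n+1})\le t)=\l t+O(\l^2 t^2)$. Two such independent variables both fall below any fixed threshold with probability $O(\l^2)$, so in leading order at most one index $i$ is a candidate; moreover, conditional on a single small $w(a_i,b_{n+1})$ the event of Step 1 reduces to $\set{w(a_i,b_{n+1})<C(n,r)-C^{(i)}(A_r)}$ (which is automatically nonnegative). A first-order Taylor expansion therefore gives
\[
\Pr\brac{b_{n+1}\in B_r^*\text{ matched to }a_i}\;=\;\l\cdot \E\!\bigl[C(n,r)-C^{(i)}(A_r)\bigr]+o(\l),
\]
and summing over $i\in[r]$ and dividing by $\l$ yields
\[
P(n,r)\;=\;\sum_{i=1}^{r}\E\!\bigl[C(n,r)-C^{(i)}(A_r)\bigr].
\]

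\textbf{Step 3 (symmetry from the random ordering).} Because $a_1,\ldots,a_n$ is a uniformly random ordering of $A$, for each fixed $i\le r$ the set $A_r\sm\set{a_i}$ is distributed as a uniformly random $(r-1)$-subset of $A$, hence as $A_{r-1}$. Consequently $\E[C^{(i)}(A_r)]=\E[C(n,r-1)]$ for every $i\in[r]$, the sum in Step~2 collapses to $r\cdot\E[C(n,r)-C(n,r-1)]$, and rearranging gives the lemma.

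\textbf{Main obstacle.} The only delicate point is the $o(\l)$ error control in Step~2: one needs $C(n,r)-C^{(i)}(A_r)$ to be uniformly integrable so that the pointwise $\l t+O(\l^2t^2)$ expansion survives inside the expectation. This is handled by noting that the marginal cost $C(n,r)-C^{(i)}(A_r)$ is at most the weight of the lightest $G$-edge incident to $a_i$, which has finite expectation $O(1/(\a n))$ since $a_i$ has degree $\a n$; dominated convergence then legitimizes the limit.
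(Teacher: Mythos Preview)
Your proof is correct and follows the same W\"astlund argument as the paper: both identify the event $\{b_{n+1}\text{ matched to }a_i\}$ with $\{w(a_i,b_{n+1})<C(n,r)-C^{(i)}(A_r)\}$ up to an $O(\l^2)$ correction, linearize in $\l$, and invoke the random ordering of $A$ to equate $\E[C^{(i)}(A_r)]$ with $\E[C(n,r-1)]$.

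One small slip in your final paragraph: the bound ``$C(n,r)-C^{(i)}(A_r)\le$ weight of the lightest $G$-edge at $a_i$'' is not valid in general, since that lightest edge may go to a vertex already covered by the optimal matching of $A_r\setminus\{a_i\}$, in which case you cannot simply append it. The paper sidesteps this by using the cruder domination $0\le C(n,r)-C^{(i)}(A_r)\le X=C(n,r)$ and the fact that $\E(X^2)$ is finite and independent of $\l$; this is enough to justify both the $O(\l^2)$ bound on two small sink-edges and the passage to the limit.
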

\begin{proof}
Choose $i$ randomly from $[r]$ and let $\widehat{B}_i\subseteq B_r$ be the $B$-vertices in the minimum weight matching of $(A_r\setminus\set{a_i})$ into $B^*$. Let $X=C(n,r)$ and let $Y=C(n,r-1)$. Let $w_i$ be the weight of the edge $(a_i, b_{n+1})$, and let $I_i$ denote the indicator variable for the event that the minimum weight of an  $A_{r}$ matching that contains this edge is smaller than the minimum weight of an $A_{r}$ matching that does not use $b_{n+1}$. We can see that $I_i$ is the indicator variable for the event $\{Y_i + w_i< X\}$, where $Y_i$ is the minimum weight of a matching from $A_r\setminus \set{a_i}$ to $B$. 
Indeed, if $(a_i, b_{n+1}) \in M_r^*$ then $w_i < X - Y_i$. Conversely, if $w_i < X - Y_i$ and no other edge from $b_{n+1}$ has weight smaller than $X - Y_i$, then $(a_i, b_{n+1})\in M_r^*$, and when $\l \to 0$, the probability that there are two distinct edges from $b_{n+1}$ of weight smaller than $X - Y_i$ is of order $O(\l^2)$. Indeed, let $\cF$ denote the existence of two distinct edges from $b_{n+1}$ of weight smaller than $X$ and let $\cF_{i,j}$ denote the event that $(a_i,b_{n+1})$ and $a_j,b_{n+1})$ both have weight smaller than $X$. 

Then, 
\beq{no2}{
\Pr(\cF)\leq n^2\E_X(\max_{i,j}\Pr(\cF_{i,j}\mid X))=n^2\E((1-e^{-\l X})^2)\leq n^2\l^2\E(X^2),
}
and since $\E(X^2)$ is finite and independent of $\l$, this is $O(\l^2)$.

Note that $Y$ and $Y_i$ have the same distribution. They are both equal to the minimum weight of a matching of a random $(r-1)$-set of $A$ into $B$. As a consequence, $\E(Y)=\E(Y_i)=\frac{1}{r}\sum_{j\in A_r}\E(Y_j)$. Since $w_i$ is $E(\l)$ distributed, as $\l\to 0$ we have from \eqref{no2} that 
\mults{
P(n,r)= \lim_{\l \rightarrow 0} \brac{\frac{1}{\l}\sum_{j\in A_r}\Pr(w_j<X-Y_j)+O(\l)}=\lim_{\l \rightarrow 0}\E\brac{ \frac{1}{\l}\sum_{j\in A_r} \brac{1- e^{-\lambda(X-Y_j)}}}\\
=\sum_{j\in A_r}\EE{X - Y_i}=r\EE{X - Y}.
}
\end{proof}
We now proceed to estimate $P(n,r)$. Fix $r$ and assume that $b_{n+1}\notin B_{r-1}^*$. Suppose that $M_{r}^*$ is obtained from $M_{r-1}^*$ by finding an augmenting path $P=(a_{r},\ldots,a_{\s},b_\t)$ from $a_{r}$ to $B\setminus B_{r-1}$ of minimum additional weight. We condition on (i) $\s$, (ii) the lengths of all edges other than $(a_\s,b_j),b_j\in B\setminus B_{r-1}$ and (iii) $\min\set{w(a_\s,b_j):b_j\in B\setminus B_{r-1}}$. With this conditioning $M_{r-1}=M_{r-1}^*$ will be fixed and so will $P'=(a_{r},\ldots,a_\s)$. We can now use the following fact: Let $X_1,X_2,\ldots,X_M$ be independent exponential random variables of rates $\l_1,\l_2,\ldots,\l_M$. Then the probability that $X_i$ is the smallest of them is $\l_i/(\l_1+\l_2+\cdots+\l_M)$. Furthermore, the probability stays the same if we condition on the value of $\min\set{X_1,X_2,\ldots,X_M}$. Thus
$$\Pr(b_{n+1}\in B_{r}^*\mid b_{n+1}\notin B_{r-1}^*)=\E\bfrac{\l}{\gd_r + \l}$$
where $\gd_r=d_{r-1}(a_\s)$ is the number of neighbors of $a_\s$ in $B\setminus B_{r-1}$.
\begin{lemma}\label{lem3}
\begin{equation}\label{13}
P(n,r)=\E\brac{\frac{1}{\gd_1} + \frac{1}{\gd_2} + \dots + \frac{1}{\gd_r}}.
\end{equation}
\end{lemma}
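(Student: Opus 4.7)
The plan is to prove the identity by induction on $r$, using the telescoping decomposition
$$P(n,r)=\sum_{i=1}^r\bigl(P(n,i)-P(n,i-1)\bigr),$$
anchored by the trivial base case $P(n,0)=0$, and establishing the incremental formula $P(n,r)-P(n,r-1)=\E(1/\gd_r)$.

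The structural observation that makes this decomposition work is the monotonicity in $r$ of the event $\{b_{n+1}\in B_r^*\}$: once $b_{n+1}$ appears in the minimum-weight matching, it cannot leave. The transition from $M_{r-1}^*$ to $M_r^*$ is effected by a single augmenting path that begins at $a_r$ and ends at an unmatched $B^*$-vertex, and such a path can only change the partner of a previously matched $B$-vertex (occurring at an interior odd position of the alternating path), never leave it unmatched (which would require it to be the terminal vertex). Consequently
$$\Pr(b_{n+1}\in B_r^*)-\Pr(b_{n+1}\in B_{r-1}^*)=\Pr(b_{n+1}\in B_r^*\mid b_{n+1}\notin B_{r-1}^*)\cdot\Pr(b_{n+1}\notin B_{r-1}^*).$$
The first factor on the right is exactly the expression computed immediately before the statement of the lemma, namely $\E(\l/(\gd_r+\l))$, while the second factor equals $1-\l\,P(n,r-1)+o(\l)$ by the very definition of $P(n,r-1)$.

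Dividing through by $\l$ and sending $\l\to 0$, the product becomes $\E(1/\gd_r)\cdot 1$: the interchange of the limit with the expectation is justified by dominated convergence via the pointwise bound $\l^{-1}\cdot\l/(\gd_r+\l)=1/(\gd_r+\l)\leq 1/\gd_r\leq 1$, together with the fact that $\gd_r\geq 1$ almost surely (since an augmenting path from $a_r$ to some vertex of $B\setminus B_{r-1}$ necessarily exists, forcing $a_\s$ to have at least one neighbor there). Iterating the recurrence yields the claimed formula.

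The main obstacle is the careful handling of the conditioning $\{b_{n+1}\notin B_{r-1}^*\}$: one must verify that this conditioning has negligible effect on the joint distribution of $\gd_r$ in the $\l\to 0$ limit. This is true because $\gd_r$ is determined by $M_{r-1}^*$ and the edge lengths within $G$, whereas the conditioning event has probability $1-O(\l)$ and constrains only the $E(\l)$-distributed edge weights incident to $b_{n+1}$, which are independent of everything else. Once this independence is observed, the rest is essentially formal manipulation of the exponential-race formula supplied just before the lemma.
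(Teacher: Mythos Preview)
Your proof is correct and essentially equivalent to the paper's: both rest on the monotonicity of $\{b_{n+1}\in B_i^*\}$ in $i$ and the conditional probability $\Pr(b_{n+1}\in B_i^*\mid b_{n+1}\notin B_{i-1}^*)=\E(\l/(\gd_i+\l))$ computed just before the lemma. The paper packages the chain rule multiplicatively, writing $\Pr(b_{n+1}\in B_r^*)=\E\bigl[1-\prod_{i=1}^r\gd_i/(\gd_i+\l)\bigr]$ and Taylor-expanding the product to first order in $\l$; you unpack the same chain rule additively via a telescoping sum and pass to the limit term by term, being somewhat more explicit than the paper about the dominated-convergence and conditioning issues.

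One minor correction to your final paragraph: the event $\{b_{n+1}\notin B_{r-1}^*\}$ does depend on the edge weights inside $G$, not only on the $E(\l)$ weights incident to $b_{n+1}$, since whether $b_{n+1}$ is used involves comparing matching costs in $G$ to the $E(\l)$ weights. So your independence claim is not literally true. Your conclusion is nonetheless valid for the reason you already give just before: $1/\gd_r\le 1$ and the conditioning event has probability $1-O(\l)$, so the conditional and unconditional expectations of $1/\gd_r$ differ by $O(\l)$, which vanishes in the limit.
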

\begin{proof} 
\begin{align}
\lim_{\l\to 0}\l^{-1}\Pr(b_{n+1}\in B_r^*)&= \lim_{\l\to 0}\l^{-1}\E\brac{1 - \frac{\gd_1}{\gd_1 + \l} \cdot \frac{\gd_2}{\gd_2 + \l} \cdots \frac{\gd_r}{\gd_r+\l}}\nonumber\\
&=\lim_{\l\to 0}\l^{-1}\E\brac{1 - \left(1 + \frac{\l}{\gd_1}\right)^{-1}\cdots \left(1 + \frac{\l}{\gd_r}\right)^{-1}} \nonumber\\
&=\lim_{\l\to 0}\l^{-1}\E\brac{ \left(\frac{1}{\gd_1} + \frac{1}{\gd_2} + \dots + \frac{1}{\gd_r}\right)\l + O(\l^2) }\nonumber\\
&=\E\left(\frac{1}{\gd_1} + \frac{1}{\gd_2} + \dots + \frac{1}{\gd_r}\right).\label{sumdelta}
\end{align}
\end{proof}
It is this point we need to assume that $G$ is pseudo-random. We have used this to control the values of the $\gd_i$. We now state (part of) Theorem 2 of Thomason \cite{T} in terms of our notation. Assume that $G(m)$ is as in Theorem \ref{th3}. 
\begin{theorem}\label{Thom}
If $X\subseteq A,Y\subseteq B$ and $\a|X|>1$ and $x=|X|,y=|Y|$, then
\[
|e(X,Y)-\a xy|\leq (xy(\a n+\m x))^{1/2}.
\]
where $e(X,Y)$ is the number of edges with one end in $X$ and the other in $Y$.
\end{theorem}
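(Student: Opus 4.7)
The plan is to establish the edge-count bound by a second-moment (variance) argument on neighborhood sizes, converting the co-degree control of pseudo-randomness into edge-distribution control. This is the standard route from jumbledness to edge discrepancy and is essentially algebraic once the right quantities are assembled.

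For $b\in B$, set $d_X(b)=|N(b)\cap X|$. Since $G$ is $\a n$-regular, double-counting gives the first moment $\sum_{b\in B}d_X(b)=\sum_{a\in X}\deg(a)=\a n x$, and counting ordered pairs of common neighbors gives
\[
\sum_{b\in B}d_X(b)^2=\sum_{a,a'\in X}co\text{-}degree(a,a')\leq \a n x+x(x-1)(\a^2 n+\m),
\]
where the diagonal terms $a=a'$ contribute $\a n x$ and the off-diagonal terms are each bounded by $\a^2 n+\m$ from \eqref{quasi}. Expanding the variance and using the first-moment identity to cancel the $\a^2 n x^2$ terms then yields
\[
\sum_{b\in B}(d_X(b)-\a x)^2=\sum_b d_X(b)^2-2\a x\cdot \a n x+n\a^2 x^2\leq \a n x+\m x^2=x(\a n+\m x).
\]

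The last step is Cauchy--Schwarz restricted to $Y$:
\[
|e(X,Y)-\a xy|=\left|\sum_{b\in Y}(d_X(b)-\a x)\right|\leq \sqrt{y\sum_{b\in B}(d_X(b)-\a x)^2}\leq \sqrt{xy(\a n+\m x)},
\]
which is the claimed inequality. The argument uses nothing beyond the regularity of $G$ and the co-degree control \eqref{quasi}; the only real obstacle is the bookkeeping in the variance expansion, and the side condition $\a|X|>1$ does not enter this particular bound (it presumably pertains to other parts of Thomason's theorem \cite{T} that are not restated here).
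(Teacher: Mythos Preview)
Your argument is correct and is precisely Thomason's original second-moment/Cauchy--Schwarz proof. The paper does not supply its own proof of this statement; it merely quotes (part of) Theorem~2 of \cite{T}, so there is nothing further to compare.
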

\subsubsection{Upper bound}
We begin with an upper bound estimate for \eqref{sumdelta}. This means finding lower bounds for the $\gd_i$. Let
\beq{values}{
r_0=n^{\b},\quad\om=n^{\g},\quad \th=\frac{1}{\om^2},\quad \e=\frac{1}{\om}, \quad k=\om^3.
}
We will establish bounds on $0<\b,\g<1$ as we proceed through the proof. We will then choose suitable values for these parameters.

We have the trivial bound $\gd_r\geq \a n-r$ which implies that 
\beq{dg1}{
\sum_{r=1}^{r_0}\frac{1}{r}\sum_{i=1}^r\frac{1}{\gd_i}\leq \sum_{r=1}^{r_0}\frac{1}{\a n-r_0}=o(1).
}
Now suppose that $r\geq r_0$ and let 
\[
\cE_r=\set{\exists S\subseteq [r,r+\th r]:\;|S|=k,\gd_i\leq \a(1-\e)(n-r-i)\text{ for }i\in S}.
\]
We claim that Theorem \ref{Thom} implies that $\cE_r$ cannot occur for $r\leq n-r_0$. Indeed, suppose that $\cE_r$ occurs. Then
\beq{geq}{
e(S,B_{r+\th r})\geq \a\sum_{i\in S}(r+i+\e(n-r-i))\geq \a k^*(r+\e(n-r)),
}
where $k^*=|\set{j:\exists i\in S\ s.t.\ a_\s=a_i=j}|$ is the number of  vertices $a_i$ such that $i\in S$. 

We show below that w.h.p.  
\beq{bg2}{
k^*\geq \frac{kp}{\log^3n} \text{ where }p=\frac{m}{\binom{n}{2}}=\frac{1}{n^\eta},
}
where we can take $\eta$ to be a sufficiently small positive number.

On the other hand, Theorem \ref{Thom} implies that
\beq{leq}{
e(S,B_{r+\th r})\leq \a k^*(r+\th r)+(k^*(r+\th r)(\a n+\m k^*))^{1/2}.
}
Plugging in the values from \eqref{values} into \eqref{geq} and \eqref{leq} we see that after subtracting $\a k^*r$ the RHS of \eqref{geq} is $\Omega\bfrac{k^*n^{\b}}{\om}$ and the RHS of \eqref{leq} is $O\bfrac{k^*n}{\om^{3/2}}$, a contradiction, assuming
\beq{bg1}{
\frac{n^\b}{n^\g}\gg \frac{n}{n^{(3\g-\eta)/2}}\text{ or }\b>1-\frac{\g-\eta}{2}.
}
Let $\z_a$ denote the number of times that vertex $a$ takes the role of $a_\s$. We will verify \eqref{bg2} by showing that with probability $1-o(n^{-1})$,
\beq{smallone}{
\z_a\leq \n_2=\frac{\log^3n}{p},\text{ for all }a\in A.
}
We will prove below that if $r\leq n-r_0$ then with probability $1-O(n^{-2\eta})$
\beq{dlarge}{
\gd_r\geq \n_1=\frac{n^{9\b/10}}{\n_2^2}=n^{9\b/10-2\eta+o(1)}\text{ except for at most }\frac{n^{9\b/10+2\eta+o(1)}}{\n_2}\text{ indices }r.
}
Let $\xi(r)$ be the indicator for the exceptions in \eqref{dlarge}.

Let $I_1,I_2,\ldots,I_s,s=\rdup{\frac{n-2r_0}{\th n}}$ be an equitable partition of $[r_0,n-r_0]$ into consecutive intervals of length $\approx \th n$. By equitable we mean that $|I_k-I_l|\leq 1$ for all $k\neq l$. Given that $\cE_r$ doesn't occur and \eqref{dlarge} we see that with probability $1-O(n^{-3\eta/2})$ we have
\beq{anot}{
\sum_{j\in I_\ell}\frac{1}{\gd_j}\leq \frac{1}{\a}\sum_{j\in I_\ell}\frac{1}{(1-\e)(n-j)}+\frac{k\n_2}{\n_1}+\xi(I_t).
}
Consequently, if $\g_t=|I_1|+\cdots+|I_t|$ and
\beq{bg3}{
\frac{k\n_2}{\th\n_1r_0}=n^{5\g+3\eta-2\b+o(1)}=o(1),
}
then with probability $1-O(n^{-3\eta/2})$ we have 
\mult{bigsum}{
C(n,n-r_0)\leq o(1)+\frac{1+o(1)}{\a}\sum_{t=1}^s\sum_{r=r_0+\g_{t-1}+1}^{r_0+\g_t}\frac{1}{r} \sum_{j=1}^{r}\frac{1}{n-j+1}+\frac{k\n_2s}{\n_1r_0}+\frac{\n_2n^{9\b/10+2\eta+o(1)}}{\n_2r_0}\\
=o(1)+\frac{1+o(1)}{\a}\sum_{r=r_0}^{n-r_0}\frac{1}{r} \sum_{j=1}^{r}\frac{1}{n-j+1}\approx \frac{\p^2}{6\a},
}
assuming that 
\beq{bg3a}{
\eta<\frac{\b}{20}.
}
{\bf Explanation:} The first $o(1)$ term in \eqref{bigsum} comes from \eqref{dg1}. For a proof of the final estimate in \eqref{bigsum} we refer the reader to Section \ref{X} in an appendix. (The calculation is taken from \cite{FJ}.)

We show later that with probability $1-o(n^{-1})$ we have
\beq{later}{
C(n,n)-C(n,n-r_0)=o(1).
}
To get an upper bound on $\E(C_n)$ we have to deal with the possibility of failure of \eqref{dlarge}. So what we do is to think of adding $R_p$ as the union of two copies of $R_{p/2}$, a red copy and a blue copy. With just the red copy, we see that with probability $1-O(n^{-3\eta/2})$ we have $C_n\lesssim \frac{\p^2}{6\a}$ and we use \eqref{psi} for the blue copy. Thus,
\[
\E(C_n)\leq \frac{\p^2}{6\a}+O(n^{-2\eta}\times p^{-1})\approx \frac{\p^2}{6\a}.
\]
\subsubsection{Lower bound}
Let  $\ell=n^{1/3}$ and $n_1=n-r_0$ and $s=\rdup{\frac{n_1}{\ell}}\approx n^{2/3}$ and equitably partition $[n_1]$ into intervals $I_j,j=1,2,\ldots,\ell$ of length $\approx s$ and define $\g_t=|I_1|+\cdots+|I_t|$ as before. Fix $j$ and let $I=I_j$. Next let $S_i,i\leq \n_2$ denote the set of elements of $A$ that appear $i$ times as $a_\s$ in $I$ and let $s_i=|S_i|$. Let $T_i$ denote the subset of $I_j$ corresponding to $S_i$. Partition $T_i=U_1\cup\cdots \cup U_i$ into $i$ copies of $S_i$ in a natural way. Then it  follows from Theorem \ref{Thom} that if $s_i\a>1$ then for $1\leq k\leq i$,
\[
\card{\sum_{j\in U_k}\gd_{j}-s_i\a(n-(j-1)s)}\leq (s_i(n-(j-1)s)(\a n+\m s_i))^{1/2}.
\]
Therefore,
\mults{
\sum_{i:s_i>1/\a}\brac{\sum_{j\in T_i}\gd_{j}-is_i\a(n-(j-1)s)}\leq \sum_{i:s_i>1/\a}i(s_i(n-(j-1)s)(\a n+\m s_i))^{1/2}\\ \leq n^{1/2}(n-(j-1)s)^{1/2}\sum_{i:s_i>1/\a}is_i^{1/2}\leq \n_2^2s^{1/2}n^{1/2}(n-(j-1)s)^{1/2}.
}
It follows then that
\beq{hope}{
\sum_{k\in I_j}\gd_k\leq \a s(n-(j-1)s)+\n_2^2s^{1/2}n^{1/2}(n-(j-1)s)^{1/2}+\a^{-1}\n_2(n-(j-1)s).
}
We have from \eqref{hope} and the fact that the harmonic mean is at most the arithmetic mean that if 
\beq{bg4}{
\eta<\frac{1}{6}\text{ which implies that }\n_2^2=o(s^{1/2})
}
then, assuming
\beq{bdx}{
\b>\frac{2}{3},
}
\mults{
\sum_{i\in I_j}\frac{1}{\gd_i}\geq \frac{s^2}{\sum_{i\in I_j}\gd_i}\geq \frac{s}{\a(n-(j-1)s)\brac{1+\frac{\n_2^2}{\a}\bfrac{n}{s(n-(j-1)s)}^{1/2}+\frac{\n_2}{\a s}}}\\
=\frac{s}{\a(n-(j-1)s)}\brac{1+O\bfrac{\n_2^2}{s^{1/2}\brac{1-\frac{(j-1)s}{n}}}} =\\
\frac{1}{\a}\sum_{i\in I_j}\frac{1}{n-i+1} \brac{1+O\bfrac{\n_2^2}{s^{1/2}\brac{1-\frac{(j-1)s}{n}}}+O\bfrac{s}{n-(j-1)s}}
\approx \frac{1}{\a}\sum_{i\in I_j}\frac{1}{n-i+1}.
}
Therefore,
\mult{aa}{
\E(C(n,n_1))=\E\brac{ \sum_{r=1}^{n_1}\frac{1}{r}\sum_{i=1}^r\frac{1}{\gd_i}}= \E\brac{\sum_{j=1}^\ell\sum_{r\in I_j}\frac{1}{r}\sum_{i=1}^{r}\frac{1}{\gd_i}}\gtrsim \frac{1}{\a}\sum_{j=1}^{\ell} \sum_{r\in I_j}\frac{1}{r}\sum_{i=1}^{\g_{j-1}}\frac{1}{n-i+1} \geq\\
  \frac{1}{\a}\sum_{r=1}^{n_1}\frac{1}{r} \sum_{i=1}^r\frac{1}{n-i+1}- \frac{s}{n-s}-\sum_{j=2}^{\ell}\frac{s}{(j-1)s(n-js)} = \frac{1}{\a}\sum_{r=1}^{n_1}\frac{1}{r} \sum_{i=1}^r\frac{1}{n-i+1}-o(1)\approx \frac{\p^2}{6\a}.
}
{\bf Explanation:} the term $-\sum_{j=2}^{\ell}\frac{s}{(j-1)s(n-js)}$ accounts for the third summation in the last term of the first line only going as far as $\g_{j-1}$ instead of $\g_j$. The term $-\frac{s}{n-s}$, which is small due to \eqref{bdx}, accounts for the $j=1$ summand in the last term of the first line being missing, due to the same thing.

This gives the correct lower bound for Theorem \ref{th3}. The proof of the final estimate in \eqref{aa} is proved in the same way as the final estimate in \eqref{bigsum}.

We now have to verify \eqref{dlarge}, \eqref{later}. These claims rest on a bound on the maximum weight of an edge in the minimum weight perfect matching. 

\subsubsection{No long edges}
The aim of this section is to show that w.h.p. no edges of weight more than $w_1=2w_0\log n$ (where $w_0=\frac{c_1\log n}{np}$) are used in the construction of $M_n$. Here $c_1$ is a sufficiently large constant. 
For a set $S\subseteq A,2\leq |S|$ we let 
\[
N_0(S)=\set{b\in B:(a,b)\in R_p\text{ and }w(a,b)\leq w_0\text{ for some }a\in S}.
\]
And for $a\in A$ let 
\[
N_0(a)=\set{b\in B: (a,b)\in E(G)\text{ and }w(a,b)\leq w_0}
\]
Let
\[
r_1=p^{-1}\log^{1/2}n;\qquad r_2=\frac{n}{10c_1\log n};\qquad r_3=\frac{n}{2000};\qquad r_4=n-\frac{4000n}{c_1\log n}.
\]
\begin{lemma}\label{lemy}
W.h.p. we have
\begin{align}
|N_0(a)|&\geq 2r_1\quad\text{ for all $a\in A$}.\label{k1}\\
|N_0(S)|&\geq \frac{c_1|S|\log n}{4}\quad\text{ for all }S\subseteq A, r_1<|S|\leq r_2.\label{k2}\\
|N_0(S)|&\geq \frac{n}{40}\quad\text{ for all }S\subseteq A, r_2<|S|\leq r_3.\label{k3}\\
|N_0(S)|&\geq n-\frac{3000n}{c_1\log n}\quad\text{ for all }S\subseteq A, r_3<|S|.\label{k4}\\
n-|N_0(S)|&\leq \frac12(n-|S|)\quad\text{for all }S\subseteq A, |S|\geq r_4.\label{k5}
\end{align}
\end{lemma}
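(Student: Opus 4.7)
For each of \eqref{k1}--\eqref{k5}, the plan is to compute the expected size of $|N_0(a)|$ or $|N_0(S)|$, concentrate via a Chernoff bound on a sum of independent Bernoullis, and then union-bound over the relevant family of subsets. The key structural observation is that for any fixed $S\seq A$, the indicators $\set{\mathbf{1}[b\in N_0(S)]:\,b\in B}$ are mutually independent, because they depend on disjoint families of random variables (the inclusion of $(a,b)$ in $R_p$ and its weight, indexed by different $b\in B$). The constant $c_1$ in $w_0=c_1\log n/(np)$ is a free parameter, to be chosen sufficiently large at the end to absorb all combinatorial factors.

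For \eqref{k1}, each $a\in A$ has exactly $\a n$ neighbors in $G$ with independent $E(1)$ weights, so $|N_0(a)|\sim\mathrm{Bin}(\a n,1-e^{-w_0})$ with mean $(1+o(1))\a c_1\log n/p$, which is $\gg 2r_1$. A Chernoff bound gives failure probability $\exp(-\Om(\log n/p))=o(n^{-2})$, and a union bound over $a\in A$ closes this case.

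For \eqref{k2}--\eqref{k4}, fix $S\seq A$ with $|S|=s$ and write $s_b=|\set{a\in S:(a,b)\notin E(G)}|$. Then $|N_0(S)|=\sum_{b\in B}Z_b$, where the $Z_b=\mathbf{1}[b\in N_0(S)]$ are independent Bernoullis with $\E Z_b=1-(1-p(1-e^{-w_0}))^{s_b}$. Since $G$ is $\a n$-regular, $\sum_b s_b=(1-\a)sn$, and convexity of $s\mapsto 1-q^s$ gives $\E|N_0(S)|\geq n[1-(1-p(1-e^{-w_0}))^{(1-\a)s}]$. For \eqref{k2} ($r_1<s\leq r_2$) we have $spw_0=o(1)$ and the mean is $(1+o(1))(1-\a)c_1 s\log n$; a Chernoff tail gives $\Pr(|N_0(S)|<c_1 s\log n/4)\leq\exp(-\Om(c_1 s\log n))$, which beats $\binom{n}{s}\leq(en/s)^s$ for $c_1$ large. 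For \eqref{k3} ($r_2<s\leq r_3$) the mean is $\Om(n)$ and the same argument goes through with enormous slack. For \eqref{k4} and \eqref{k5}, where instead we need upper bounds on $|B\sm N_0(S)|$, we first use Theorem \ref{Thom} applied to $S$ and $Y=\set{b:s_b<(1-\a)s/2}$ to show $|Y|=o(n/\log n)$ whenever $s\geq r_3$, so that $\E|B\sm N_0(S)|\leq|Y|+n(1-p(1-e^{-w_0}))^{(1-\a)s/2}$.

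Plugging in, we get $\E|B\sm N_0(S)|\leq o(n/\log n)+n^{1-\Om(c_1)}$, which yields \eqref{k4} directly after Chernoff concentration on $\sum_{b}\mathbf{1}[b\notin N_0(S)]$ and a union bound over $S$. For \eqref{k5}, parametrize $S$ by its complement $T=A\sm S$ of size $t=n-|S|\leq 4000n/(c_1\log n)$; for a fixed such $S$ and a fixed candidate set $U\seq B\sm Y$ with $|U|=t/2$, independence across $b\in U$ yields $\Pr(U\seq B\sm N_0(S))\leq\exp(-(1-\a)c_1 t\log n/2)$, while the number of $(S,U)$ pairs is at most $\binom{n}{t}\binom{n}{t/2}\leq(en/t)^{3t/2}=\exp(O(t\log\log n))$, which is dominated by the preceding bound once $c_1$ is large enough. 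The main obstacle throughout is the simultaneous choice of $c_1$: each Chernoff tail must beat the corresponding combinatorial factor $\binom{n}{s}$ (or $\binom{n}{t}\binom{n}{t/2}$); however, since $c_1$ enters each Chernoff exponent multiplicatively while appearing only inside logarithms in the subset counts, a single sufficiently large $c_1$ works for all five estimates.
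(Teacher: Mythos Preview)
Your overall architecture---compute the mean, apply a Chernoff tail, union-bound over subsets---is exactly what the paper does. The paper, however, simply takes $|N_0(S)|\sim\mathrm{Bin}(n,q)$ with $q=1-(1-p(1-e^{-w_0}))^{|S|}$ and reads off the needed estimates directly: $q\ge w_0p|S|/2$ for \eqref{k2}, $q>1/20$ for \eqref{k3}, $q\ge 1-n^{-c_1/3000}$ for \eqref{k4}, and $q\ge 1-n^{-c_1/2}$ for \eqref{k5}. No pseudo-randomness (Theorem~\ref{Thom}) is invoked anywhere in the proof of this lemma.

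Your finer accounting through $s_b$ contains a genuine error. The map $x\mapsto 1-q^x$ is \emph{concave} for $0<q<1$ (its second derivative is $-(\log q)^2q^x<0$), so Jensen's inequality yields
\[
\E|N_0(S)|\ =\ \sum_{b\in B}\bigl(1-q^{s_b}\bigr)\ \le\ n\bigl(1-q^{(1-\a)s}\bigr),
\]
the reverse of what you wrote; you therefore obtain no lower bound on the mean from this step. Moreover, even granting the correct asymptotic $(1+o(1))(1-\a)c_1 s\log n$ for the mean (which does follow by linearising $1-q^{s_b}\sim s_b\,p(1-e^{-w_0})$ when $spw_0=o(1)$), this quantity is \emph{below} the target $c_1 s\log n/4$ whenever $\a\ge 3/4$, and enlarging $c_1$ does not help since it cancels from the ratio. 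So \eqref{k2} does not follow from your argument across the full range $\a\in(0,1)$; the same obstruction affects your sketch for \eqref{k3}. Your alternative route for \eqref{k4}--\eqref{k5} via Theorem~\ref{Thom} (bounding the exceptional set $Y=\{b:s_b<(1-\a)s/2\}$) is sound, but it is heavier machinery than the paper's one-line binomial estimate.
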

\begin{proof}
We first observe that $|N_0(a)|$ is distributed as $Bin(\a n,1-e^{-w_0})$ and $\a n(1-e^{-w_0})\gtrsim \a r_1\log^{1/2}n$ and so the Chernoff bounds imply that
\[
\Pr(\exists a:|N_0(a)|\leq 2r_1)\leq ne^{-\a nw_0/4}=o(n^{-1}).
\]
We next observe that for a fixed $S\subseteq A$ we have 
\[
|N_0(S)|\sim Bin(n,q)\qquad \text{where }q=1-(1-p(1-e^{-w_0}))^{|S|}=1-(1-(1+o(1))w_0p)^{|S|}.
\]
(Here $\sim$ is used to indicate the distribution of $|N_0(S)|$.)

If $r_1<|S|\leq r_2$ then $q\geq w_0p|S|/2$. So,
\[
\Pr\brac{\neg\eqref{k2}}\leq \sum_{s=r_1}^{r_2}\binom{n}{s} \Pr\brac{Bin(n,q)\leq \frac{c_1s\log n}{4}}
\leq \sum_{s=r_1}^{r_2}\bfrac{ne}{s}^s e^{-c_1s\log n/4}=\sum_{s=r_1}^{r_2}\bfrac{n^{1-c_1/4}e}{s}^s=o(1).
\]
If $r_2<|S|\leq r_3$ then $q>1/20$. So,
\[
\Pr\brac{\neg\eqref{k3}}\leq \sum_{s=r_2}^{r_3}\binom{n}{s} \Pr\brac{Bin\brac{n,\frac{1}{20}}\leq \frac{n}{40}} \leq \sum_{s=r_2}^{r_3}\bfrac{ne}{s}^s e^{-n/160}\leq 2(2000)^{n/2000} e^{-n/160}=o(1).
\]
If $r_3<|S|\leq r_4$ then $q\geq 1-n^{-c_1/3000}$. So, 
\[
\Pr\brac{\neg\eqref{k4}}\leq 2^nn^{-(n-n_0)c_1/3000}=o(1).
\]
If $r_4< |S|$, let $t=n-|S|$. Then, $q\geq 1-n^{-c_1/2}$ and so
\[
\Pr(\neg\eqref{k5})\leq \sum_{t=1}^{n-r_4}\binom{n}{t}\binom{n}{t/2}(1-q)^{t/2}\leq \sum_{t=1}^{n-r_4} \bfrac{ne}{t}^{2t}n^{-c_1t/2}=o(1).
\]
\end{proof}
\begin{lemma}\label{lemx}
W.h.p., no edge of length at least $w_1$ appears in any $M_r,r\leq n$
\end{lemma}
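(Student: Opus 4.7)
I argue by contradiction: suppose some $M_r$, $r\leq n$, contains an edge $(a,b)$ with $w(a,b)\geq w_1$. Let $M'=M_r\setminus\{(a,b)\}$, a matching of $A_r\setminus\{a\}$ onto $B_r\setminus\{b\}$, and let $G_0$ denote the spanning subgraph of $G(m)$ consisting of edges of weight at most $w_0$. Because the sets $N_0(\cdot)$ defined in Lemma \ref{lemy} are subsets of the corresponding $G_0$-neighborhoods, Lemma \ref{lemy} delivers lower bounds on $G_0$-expansion that I will apply repeatedly.

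The plan is to produce an augmenting path $P$ from $a$ to some $M'$-unmatched vertex of $B$ (either $b$ itself or some vertex of $B\setminus B_r$), alternating $G_0$-edges and $M'$-edges, and containing at most $k\leq \tfrac{3}{2}\log n$ edges from $G_0$. Augmenting $M_r$ along $P$ would then yield a matching $M''$ of $A_r$ into $B$ of weight at most $w(M_r)-w(a,b)+kw_0\leq w(M_r)-w_1+\tfrac{3}{2}w_0\log n<w(M_r)$, contradicting the optimality of $M_r$.

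To build $P$, I would run an alternating BFS from $a$ in $G_0\cup M'$. Write $S_i\subseteq A$ and $T_i\subseteq B$ for the vertices reachable in at most $2i$ and $2i+1$ steps respectively. If any $T_i$ meets an $M'$-unmatched vertex the BFS already produces the desired path; otherwise each vertex of $T_i$ is $M'$-matched into $A_r\setminus\{a\}$, so $|S_{i+1}|=1+|T_i|$ and $T_i\supseteq N_0(S_i)$. Starting from $|S_0|=1$, Lemma \ref{lemy}\eqref{k1} gives $|T_0|\geq 2r_1$; then iterating Lemma \ref{lemy}\eqref{k2} yields multiplicative growth by factor $c_1\log n/4$ while $|S_i|$ lies in $[r_1,r_2]$, so $|S_i|$ exceeds $r_2$ within $O(\log n/\log\log n)$ steps. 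One application of \eqref{k3} pushes $|S_i|$ past $n/40$, one of \eqref{k4} past $r_4$, and repeated application of \eqref{k5} halves $n-|S_i|$ at each further step. By the $B$-sided analogue of \eqref{k1} (valid w.h.p.\ by the same proof), every $b'\in B$ satisfies $|N_0(b')|\geq 2r_1$; hence once $n-|S_i|<2r_1$, the set $S_i$ must intersect $N_0(b)$, forcing $b\in T_i$. The total BFS depth is bounded by $O(\log n/\log\log n)+\log_2(np)+O(\log\log n)\leq (1+o(1))\log_2 n<\tfrac{3}{2}\log n$, so $P$ uses at most $\tfrac{3}{2}\log n$ edges from $G_0$, as required.

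The main obstacle is the large-$r$ regime (in particular $r=n$), where $b$ is the only $M'$-unmatched vertex of $B$ and the BFS must reach it specifically; this is handled by the halving provided by \eqref{k5} together with the symmetric version of \eqref{k1} applied at $b$. No union bound over $r$ is required: conditional on the (w.h.p.) event that Lemma \ref{lemy} and its $B$-sided analogue hold, the contradiction argument applies simultaneously to every $r\leq n$, yielding Lemma \ref{lemx}.
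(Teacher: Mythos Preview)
Your argument is correct and follows essentially the same alternating-BFS strategy as the paper's proof: grow sets $S_i$ using the expansion estimates \eqref{k1}--\eqref{k5} of Lemma~\ref{lemy} until a cheap augmenting path is found, then contradict the optimality of $M_r$. Your treatment is in fact slightly more careful than the paper's in the regime $r=n$ (where $B\setminus\f(A_r)=\emptyset$ and Case~(b) as written never triggers); you handle this by working with $M'=M_r\setminus\{(a,b)\}$ and invoking the $B$-sided analogue of \eqref{k1} to force $b\in T_i$ once $n-|S_i|<2r_1$, which is exactly the missing ingredient.
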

\begin{proof}
 We first consider $r=1,2,\ldots,r_1=p^{-1}\log^{1/2}n$. If $a\in A_r$ and $w(a,\f_r(a))>w_0$ then \eqref{k1} implies that there are at least $r_1$ choices of $b\in B\setminus \f(A_r)$ such that we can reduce the matching cost by replacing $(a,\f_r(a))$ by $(a,b)$.

We now consider $r>r_1$. Choose $a\in A_r$ and let $S_0=\set{a}$ and let an alternating path $P=(a=u_1,v_1,\ldots,v_{k-1},u_k,\ldots)$ be {\em acceptable} if (i) $u_1,\ldots,u_k,\ldots\in A$, $v_1,\ldots,v_{k-1},\ldots\in B$, (ii) $(u_{i+1},v_i)\in M_r, i=1,2,\ldots$ and (iii) $w(u_i,v_i)\leq w_0,\,i=1,2,\ldots$.

Now consider the sequence of sets $S_0=\set{a_0},S_1,S_2,\ldots,S_i,\ldots$ defined as follows: 

{\bf Case (a):}  $N_0(S_i)\subseteq \f(A_r)$. In this case we define $S_{i+1}=\f_r^{-1}(T_i)$, where $T_i=N_0(S_i)$. By construction then, every vertex in $S_j,j\leq i+1$ is the endpoint of some acceptable alternating path. 

{\bf Case (b):}  $T_i\setminus \f(A_r)\neq \emptyset$. In this case there exists $b\in T_i$ which is the endpoint of some acceptable augmenting path. 

It follows from \eqref{k2} applied to $S_i$ that w.h.p. there exists $k=O\bfrac{\log n}{\log\log n}$ such that $|N_0(S_k)|>r$ and so Case (b) holds. This implies that if $r_1\leq r\leq r_2$ then $w(a,\f_r(a))\leq w_0\log n$ for all $a\in A_r$. For if $w(a,\f_r(a))>w_0\log n$ then there are at least $\Omega(r\log n)$ choices of $b\in B\setminus \f(A_r)$ such that we can reduce the matching cost by deleting $(a,\f_r(a))$ and changing $M_r$ via an acceptable augmenting path from $a$ to $b$. The extra cost of the edges added in this path is $o(w_0\log n)$.

Now consider $r_2<r\leq r_3=n/100$. We know that w.h.p. there is $k=o(\log n)$ such that $|S_k|>r_2$ and  that by \eqref{k3} we have that w.h.p. $|N_0(S_{k+1})|>n/40>r$ and we are in Case (b) and there is a low cost augmenting path for every $a$, as in the previous case. When $r_3<|S_k|\leq r_4$ we use the same argument and find by \eqref{k4} we have w.h.p. $N_0(S_{k+1})>r_4 \geq r$ and there is a low cost augmenting path. Similarly for $r>r_4$, using \eqref{k5}.

Finally note that the number of edges in the augmenting paths we find is always at most $o(\log n)+\log_2n \leq 2\log n$.
\end{proof}
This also proves that 
\[
\E(C(n,n)-C(n,n-r_0)=O\brac{n^{\b}w_1}=o(1),
\]
provided 
\beq{bg5}{
\b<1-\eta.
}
This verifies \eqref{later}.

To prove \eqref{smallone} we argue
\mult{ff}{
\Pr\brac{\exists a\in A:\card{\set{e:a\in e,X_e\leq w_1}}\geq \frac{\log^3n}{p}}\leq \Pr\brac{Bin\brac{\a n,w_1}\geq \frac{\log^3n}{p}}\\
\leq \binom{\a n}{p^{-1}\log^3n}w_1^{p^{-1}\log^3n}\leq \bfrac{ew_1}{\log^3n}^{p^{-1}\log^2n}=o(1).
}
This verifies \eqref{smallone}. 

We finally consider \eqref{dlarge}. Consider how a vertex $a\in A$ loses neighbors in $B\setminus B_r$. It can lose up to $\n_2$ for the times when $a=a_\s$. Otherwise, it loses a neighbor when $a_\s\neq a$ chooses a common neighbor with $a$. The important point here is that this choice depends on the structure of $G$, but not on the weights of edges incident with $a$. It follows that the cheapest neighbors at any time are randomly distributed among the current set of available neighbors. To get to the point where $a_\s=a$ and $\gd_r\leq \n_1$, we must have at least one of the $\n_2$ original cheapest neighbors occuring in a random $\n_1$ subset of a set of size $\approx \m_r=\min\set{\a n,n-r}$. This has probability $O(\n_1\n_2/\m_r)$ and \eqref{dlarge} follows from the Markov inequality.

We finally choose $\b,\g,\eta$ such that \eqref{bg1}, \eqref{bg3}, \eqref{bg3a}, \eqref{bg4}, \eqref{bdx} and \eqref{bg5} hold. We let $\b=5/6+\eta$ and then we choose $\g=1/3-\eta,\eta=1/25$.

\section{Final remarks}
We have shown that adding sufficiently many random edges is enough to ``smooth out'' the optimal value in certain optimization problems. There are several questions that remain. The first is to remove the pseudo-random requirement from Theorem \ref{th3}. The problem is to control the sizes of the $\gd_r$. Another possibility is to consider matchings and 2-factors in arbitrary regular graphs, not just bipartite ones. Then one can consider the Travelling Salesperson problem. We could also consider relaxing $\a$ to be $o(1)$ and we can consider more general distributions than $E(1)$.

{\bf Acknowledgement:} we thank Bruce Reed for pointing out that it should be $k^*$ and not $k$ in \eqref{geq} and indicating the necessary corrections needed. 

\appendix
\section{Proof of final estimate from \eqref{bigsum}}\label{X}
We use the following expression from Young \cite{Y91}.
\beq{Young}{
\sum_{i=1}^n\frac{1}{i}=\log n+\g+\frac{1}{2n}+O(n^{-2}),\qquad \text{where $\g$ is Euler's constant.}
}
\begin{align}
\sum_{r=r_0}^{n-r_0}\frac{1}{r}\sum_{j=1}^{r}\frac{1}{n-j+1}& \sum_{j=r_0}^{n-r_0}\frac{1}{n-j+1}\sum_{r=j}^{n-r_0}\frac{1}{r},\nonumber\\
&=\sum_{j=r_0}^{n-r_0}\frac{1}{n-j+1}\brac{\log\bfrac{n-r_0}{j}+\frac{1}{2(n-r_0)}- \frac{1}{2j}+O(j^{-2})}+o(1), \nonumber\\
&=\sum_{j=r_0}^{n-r_0}\frac{1}{n-j+1}\log\bfrac{n-r_0}{j}+o(1),\nonumber\\
&=\sum_{i=r_0}^{n-r_0}\frac{1}{i+1}\log\bfrac{n-r_0}{n-i}+o(1),\label{int1}\\
&=\int_{x=r_0}^{n-r_0}\frac{1}{x}\log\bfrac{n-r_0}{n-x}dx+o(1).\nonumber
\end{align}
We can replace the sum in \eqref{int1} by an integral because the sequence of summands is unimodal and the terms are all $o(1)$.

Continuing, we have
\begin{align}
& \int_{x=r_0}^{n-r_0}\frac{1}{x}\log\bfrac{n-r_0}{n-x}dx\nonumber\\
&=-\int_{x=r_0}^{n-r_0}\frac{1}{x} \log\brac{1-\frac{x-r_0}{n-r_0}}dx\nonumber\\
&=\sum_{k=1}^{\infty}\int_{x=r_0}^{n-r_0}\frac{1}{x}\frac{(x-r_0)^k}{k(n-r_0)^k}dx\nonumber\\
&=\sum_{k=1}^{\infty} \int_{y=0}^{n-2r_0}\frac{1}{y+r_0}\frac{y^k}{k(n-r_0)^k}dy.\label{I0}
\end{align}
Observe next that for every $k\geq 1$
$$\int_{y=0}^{n-2r_0}\frac{1}{y+r_0}\frac{y^k}{k(n-r_0)^k}dy\leq \int_{y=0}^{n-2r_0}\frac{y^{k-1}}{k(n-r_0)^k}dy \leq \frac{1}{k^2}.$$
So,
\beq{I1}{
0\leq \sum_{k=\log n}^{\infty}\int_{x=r_0}^{n-r_0}\frac{1}{x}\frac{(x-r_0)^k}{k(n-r_0)^k}dx\leq \sum_{k=\log n}^\infty \frac{1}{k^2}=o(1).
}
If $1\leq k\leq \log n$ then we write
$$\int_{y=0}^{n-2r_0}\frac{1}{y+r_0}\frac{y^k}{k(n-r_0)^k}dy= \int_{y=0}^{n-2r_0}\frac{(y+r_0)^{k-1}}{k(n-r_0)^k}dy
+\int_{y=0}^{n-2r_0}\frac{y^k-(y+r_0)^{k}}{(y+r_0)k(n-r_0)^k}dy.$$
Now
\beq{I2}{
\int_{y=0}^{n-2r_0}\frac{(y+r_0)^{k-1}}{k(n-r_0)^k}dy =\frac{1}{k^2}\frac{(n-r_0)^{k}-(r_0+1)^k}{(n-r_0)^k}= \frac{1}{k^2}+O(n^{-\b k/2}).
}
If $k=1$ then our choice of $r_0$ implies that
$$\int_{y=0}^{n-2r_0}\frac{(y+r_0)^{k}-y^k}{(y+r_0)k(n-r_0)^k}dy\leq\frac{r_0\log(n-2r_0)}{n-r_0}=o(1).$$
And if $2\leq k\leq \log n$ then
\begin{align}
\int_{y=0}^{n-2r_0}\frac{(y+r_0)^{k}-y^k}{(y+r_0)k(n-r_0)^k}dy& =\sum_{l=1}^k\int_{y=0}^{n-2r_0}\binom{k}{l} \frac{y^{k-l}r_0^l}{(y+r_0)k(n-r_0)^k}dy\nonumber\\
&\leq \sum_{l=1}^k\int_{y=0}^{n-2r_0}\binom{k}{l} \frac{y^{k-l-1}r_0^l}{k(n-r_0)^k}dy\nonumber\\
&= \sum_{l=1}^k\binom{k}{l}\frac{r_0^l(n-2r_0)^{k-l}}{k(k-l)(n-r_0)^k}\label{xtra}\\
&=O\bfrac{kr_0}{k(k-1)n}=O\bfrac{1}{kn^{1-\b}}.\label{xtra1}
\end{align}
To go from \eqref{xtra} to \eqref{xtra1} we argue that if the summand in \eqref{xtra} is denoted by $u_l$ then $u_{l+1}/u_l=O(r_0/n)$ for $2\leq l\leq \log n$. Hence the sum is $O(u_1)$.

It follows that
\beq{I3}{
0\leq \sum_{k=1}^{\log n}\int_{y=0}^{n-2r_0}\frac{(y+r_0)^{k}-y^k}{(y+r_0)k(n-r_0)^k}dy = o(1)+O\brac{\sum_{k=2}^{\log n}\frac{1}{kn^{1-\b}}}=o(1).
}
Equations \eqref{I1}, \eqref{I2} and \eqref{I3} complete the argument.

\begin{thebibliography}{99}
\bibitem{AHH} E. Aigner-Horev and D. Hefetz, \href{https://arxiv.org/pdf/2004.08637.pdf}{Rainbow Hamilton cycles in randomly coloured randomly perturbed dense graphs}.
\bibitem{A92} D. Aldous, Asymptotics in the random assignment problem, {\em Probability Theory and Related Fields} 93 (1992) 507-534.
\bibitem{A01} D. Aldous, The $\zeta(2)$ limit in the random assignment problem, {\em Random Structures and Algorithms} 4 (2001) 381-418.
\bibitem{AF} M. Anastos and A.M. Frieze, How many randomly colored edges make a randomly colored dense graph rainbow Hamiltonian or rainbow connected?, {\em Journal of Graph Theory} 92 (2019) 405-414.
\bibitem{BTW} J. Balogh, A. Treglown and A. Wagner, Tilings in randomly perturbed dense graphs, {\em Combinatorics, Probability and Computing} 28 (2019) 159-176.
\bibitem{BFM0} A. Beveridge, A.M. Frieze and C. McDiarmid, Random minimum length spanning trees in regular graphs, {\em Combinatorica} 18 (1998) 311-333.
\bibitem{BH} S. Bhamidi and R. van der Hofstadt, Weak disorder in the stochastic mean field model of distance, {\em Annals of Applied Probability} 22 (2012) 29-69.
\bibitem{BHH} S. Bhamidi, R. van der Hofstadt and G. Hooghiemstra, First passage percolation on random graphs with finite mean degrees, {\em Annals of Probability} 20 (2010) 1907-1965.
\bibitem{BHY} W. Bedenknecht, J. Han and Y. Kohayakawa, Powers of tight Hamilton cycles in randomly perturbed hypergraphs, {\em Random Structures and Algorithms} 55 (2019) 795-807.
\bibitem{BFM} T. Bohman, A.M. Frieze and R. Martin, How many random edges make a dense graph Hamiltonian?, {\em Random Structures and Algorithms} 22 (2003)  33-42.
\bibitem{BFKM}  T. Bohman, A.M. Frieze, M. Krivelevich and R. Martin, Adding random edges to dense graphs, {\em Random Structures and Algorithms} 24 (2004) 105-117.
\bibitem{BMPP} J. B\"ottcher, R. Montgomery, O. Parczyk and Y. Person, Embedding spanning bounded degree graphs in randomly perturbed graphs, {\em Electronic Notes in Discrete Mathematics} 61 (2017) 155-161.
\bibitem{DH} D. Dadush and S. Huiberts, A friendly smoothed analysis of the simplex method, Proceedings of STOC 2018.
\bibitem{DT} S. Das and A. Treglown, Ramsey properties of randomly perturbed graphs: cliques and cycles, to appear in {\em  Combinatorics, Probability and Computing} 29 (2020) 830-867.
\bibitem{Dijk} E. Dijkstra, A note on two problems in connexion with graphs, {\em Numerische Mathematik} 1 (1959) 269-271. 
\bibitem{DRRS} A. Dudek, C. Reiher, A. Ruci\'nski and M. Schacht, Powers of Hamiltonian cycles in randomly augmented graphs, {\em Random Structures and Algorithms} 56 (2020) 122-141.
\bibitem{FKG} C.M. Fortuin, P.W. Kasteleyn and J.  Ginibre, Correlation inequalities on some partially ordered sets, {\em Communications in Mathematical Physics} 22 (1971) 89-103, 
\bibitem{FJ} A.M. Frieze and T. Johansson, Minimum-cost matching in a random graph with random costs, {\em
SIAM Journal on Discrete Mathematics} 31 (2017) 489-510.
\bibitem{FK} A.M. Frieze and M. Karo\'nski, Introduction to Random Graphs, Cambridge University Press, 2015.
\bibitem{FRT} A.M. Frieze, M. Ruszinko and L. Thoma, A note on random minimum length spanning trees, {\em Electronic Journal of Combinatorics} 7 (2000).
\bibitem{HZ} J. Han and Y. Zhao,  Hamiltonicity in randomly perturbed hypergraphs, {\em Journal of Combinatorial Theory, Series} B (2020) 14-31.
\bibitem{Jan1}  S. Janson, Poisson approximation for large deviations, {\em Random Structures and Algorithms} 1 (1990) 221-230.
\bibitem{Jan}  S. Janson, One, two and three times $\log n/n$ for paths in a complete graph with random weights, {\em Combinatorics, Probability and Computing} 8 (1999) 347-361.
\bibitem{K87} R.M. Karp, An upper bound on the expected cost of an optimal assignment,
{\em Discrete Algorithms and Complexity: Proceedings of the Japan-US Joint
  Seminar (D. Johnson et al., eds.)}, Academic Press, New York, 1987, 1-4.
\bibitem{KKS} M. Krivelevich, M. Kwan and B. Sudakov, Cycles and matchings in randomly perturbed digraphs and hypergraphs, {\em Combinatorics, Probability and Computing} 25 (2016) 909-927.
\bibitem{KKS1} M. Krivelevich, M. Kwan and B. Sudakov, Bounded-degree spanning trees in randomly perturbed graphs, {\em SIAM Journal on Discrete Mathematics} 31 (2017) 155-171.
\bibitem{KST} M. Krivelevich, B. Sudakov and P. Tetali, On smoothed analysis in dense graphs and formulas, {\em Random Structures and Algorithms} 29 (2006) 180-193.
\bibitem{LW04} S. Linusson and J. W\"astlund, A proof of {P}arisi's conjecture on the random assignment problem,
{\em Probability Theory and Related Fields} 128 (2004) 419-440.
\bibitem{MM} A. McDowell and R. Mycroft, Hamilton $\ell$-cycles in randomly-perturbed hypergraphs, {\em The electronic journal of combinatorics} 25 (2018).
\bibitem{NPS05} C. Nair, B. Prabhakar and M. Sharma, Proofs of the {P}arisi and {C}oppersmith-{S}orkin random assignment conjectures, {\em Random Structures and Algorithms} 27 (2005) 413-444.
\bibitem{P1} O. Parczyk, 2-universality in randomly perturbed graphs, {\em European Journal of Combinatorics} 87 (2020) 103-118.
\bibitem{P98} G. Parisi, A conjecture on Random Bipartite Matching, Physics e-Print archive (1998).
\bibitem{P} E. Powierski, \href{https://arxiv.org/pdf/1902.02197.pdf}{Ramsey properties of randomly perturbed dense graphs}.
\bibitem{ST} D. Spielman and S. Teng, Smoothed Analysis of the Simplex Algorithm, {\em Journal of the ACM} 51 (2004) 385-463.
\bibitem{SV} B. Sudakov and J. Vondrak, How many random edges make a dense hypergraph non-2-colorable?, {\em Random Structures and Algorithms} 32 (2008) 290-306.
\bibitem{T} A. Thomason, Dense expanders and pseudo-random bipartite graphs, {\em Discrete Mathematics} 75 (1989) 381-386.
\bibitem{V} R. Vershynin, Beyond Hirsch Conjecture: Walks on Random Polytopes and Smoothed Complexity of the Simplex Method, {\em SIAM Journal on Computing} 39 (2009) 646–678.
\bibitem{W80a} D.W. Walkup, On the expected value of a random asignment problem, {\em SIAM Journal on Computing} 8 (1979) 440-442.
\bibitem{W0} J. W\"astlund, Random matching problems on the complete graph, {\em Electronic Communications in Probability} (2008) 258-265.
\bibitem{W1} J. W\"astlund, A simple proof of the Parisi and Coppersmith-Sorkin formulas for the random assignment problem, {\em Link\"oping Studies in Mathematics} 6 (2005).
\bibitem{W2} J. W\"astlund, An easy proof of the $\z(2)$ limit in the random assignment problem, {\em Electronic Communications in Probability} {\bf 14} (2009) 261-269.
\bibitem{Y91} R.M. Young, Euler's constant, {\em Mathematical Gazette} 75 (1991) 187-190.
\end{thebibliography}
\end{document}